\documentclass[a4paper,12pt]{article}
\usepackage{a4wide}
\usepackage{amsmath}
\usepackage{amssymb}
\usepackage{amsthm}
\usepackage{latexsym}
\usepackage{graphicx}
\usepackage[english]{babel}
\usepackage{makeidx}

\newtheorem{obs} [subsection]{Remark}
\newtheorem{exm} [subsection]{Example}

\newtheorem{prop}[subsection]{Proposition}

\newtheorem{teor}[subsection]{Theorem}
\newtheorem{lema}[subsection]{Lemma}
\newtheorem{cor} [subsection]{Corollary}
\newcommand{\Zng}{$\mathbb Z^n$-graded $S$-module}

\def\sdepth{\operatorname{sdepth}}
\def\depth{\operatorname{depth}}

\def\deg{\operatorname{deg}}
\def\lcm{\operatorname{lcm}}

\def\reg{\operatorname{reg}}
\begin{document}
\selectlanguage{english}
\frenchspacing

\begin{center}
\textbf{A class of square-free monomial ideals associated to two integer sequences}
\vspace{10pt}

\large
Mircea Cimpoea\c s
\end{center}
\normalsize

\begin{abstract}
Given two finite sequences of positive integers $\alpha$ and $\beta$, we associate a square free monomial ideal $I_{\alpha,\beta}$ in a ring of polynomials $S$, and we recursively compute the algebraic invariants of $S/I_{\alpha,\beta}$. Also, we give precise formulas in special cases.

\noindent \textbf{Keywords:} Stanley depth, finite integer sequences, square-free monomial ideals.

\noindent \textbf{2010 Mathematics Subject Classification:} 13C15, 13P10, 13F20.
\end{abstract}

\section*{Introduction}

Let $K$ be a field and $S=K[x_1,\ldots,x_n]$ the polynomial ring over $K$.
Let $M$ be a \Zng. A \emph{Stanley decomposition} of $M$ is a direct sum $\mathcal D: M = \bigoplus_{i=1}^rm_i K[Z_i]$ as a $\mathbb Z^n$-graded $K$-vector space, where $m_i\in M$ is homogeneous with respect to $\mathbb Z^n$-grading, $Z_i\subset\{x_1,\ldots,x_n\}$ such that $m_i K[Z_i] = \{um_i:\; u\in K[Z_i] \}\subset M$ is a free $K[Z_i]$-submodule of $M$. We define $\sdepth(\mathcal D)=\min_{i=1,\ldots,r} |Z_i|$ and $\sdepth(M)=\max\{\sdepth(\mathcal D)|\;\mathcal D$ is a Stanley decomposition of $M\}$. The number $\sdepth(M)$ is called the \emph{Stanley depth} of $M$. 

Herzog, Vladoiu and Zheng show in \cite{hvz} that $\sdepth(M)$ can be computed in a finite number of steps if $M=I/J$, where $J\subset I\subset S$ are monomial ideals. In \cite{rin}, Rinaldo give a computer implementation for this algorithm, in the computer algebra system $\mathtt{CoCoA}$ \cite{cocoa}. In \cite{apel}, J.\ Apel restated a conjecture firstly given by Stanley in \cite{stan}, namely that $\sdepth(M)\geq\depth(M)$ for any \Zng $\;M$. This conjecture proves to be false, in general, for $M=S/I$ and $M=J/I$, where $0\neq I\subset J\subset S$ are monomial ideals, see \cite{duval}. For a friendly introduction in the thematic of Stanley depth, we refer the reader \cite{her}.

Stanley depth is an important combinatorial invariant and deserves a thorough study. The explicit computation of the Stanley depth is a
difficult task. Also, although the Stanley conjecture was disproved, it is interesting to find large classes of ideals which satisfy the Stanley inequality. In our paper, we consider certain classes of square free monomial ideals for which we can apply inductive methods in order to study the Stanley depth invariant. We consider that our approach could be useful to study other classes of ideals.

Given two sequences of positive integers 
$$\alpha: a_1<a_2<\cdots<a_s,\; \beta: b_1<b_2<\cdots<b_s,$$ with $a_i<b_i$ for all $1\leq i \leq s$ and $b_s\leq n$, we consider the square-free monomial ideal $$I_{\alpha,\beta}=(x_{a_1}\cdots x_{b_1},\ldots,x_{a_s}\cdots x_{b_s})\subset S=K[x_1,\ldots,x_n].$$ 
Note that $I_{\alpha,\beta}$ is a natural generalization for the path ideal associated to the path graph, see \cite{path} for further details. The main goal of our paper is to study the algebraic and combinatorial invariants of the ideal $I_{\alpha,\beta}$.

\footnotetext[1]{The support from grant ID-PCE-2011-1023 of Romanian Ministry of Education, Research and Innovation is gratefully acknowledged.}

\newpage
For $s\geq 1$, let $j:=j(\alpha,\beta):=\max\{i:\; a_i\leq b_1\}$. We consider the sequences $\alpha': a'_1=a_{j+1} < a'_2=a_{j+2}< \cdots < a'_{s-j}=a_s$ and $\beta': b'_1=b_{j+1} < b'_2=b_{j+2} < \cdots < b'_{s-j}=b_s$. Assume $s>1$ and $j>1$. If $a_{j+1}>b_1+1$, we define $\alpha'': a''_1=b_1+1 < a''_2=a_{j+1} < \cdots < a''_{s-j+1}=a_s$ and
$\beta'': b''_1=b_2 < b''_2 = b_{j+1} < \cdots < b''_{s-j+1}=b_s$. If $a_{j+1}=b_1+1$, we define $\alpha'': a''_1=b_1+1 < a''_2=a_{j+2}< \cdots < a''_{s-j}=a_s$ and $\beta'': b''_1=b_2 < b''_2 = b_{j+2} < \cdots < b''_{s-j}=b_s$.
We define, recursively, the following numbers:
$$ \varphi(\alpha,\beta):= \begin{cases} n,\;s=0 \\ \varphi(\alpha',\beta')-1,\;s\geq 1, j=1 \\ \varphi(\alpha'',\beta'')-1,\;s>1 ,j > 1
 \end{cases},\; \psi(\alpha,\beta):= \begin{cases} n,\;s=0 \\ \psi(\alpha',\beta')-1,\;s\geq 1 \end{cases}.$$
In Theorem $1.6$, we prove that
$$\sdepth(S/I_{\alpha,\beta}) =\depth(S/I_{\alpha,\beta}) =\varphi(\alpha,\beta),\; \dim(S/I_{\alpha,\beta})=\psi(\alpha,\beta).$$
For special cases of $\alpha$ and $\beta$, we give precise formulas for $\sdepth(S/I_{\alpha,\beta})$ and $\depth(S/I_{\alpha,\beta})$, see Proposition $1.9$, Theorem $1.10$ and Proposition $1.11$. Also, we prove that $$ \sdepth(I_{\alpha,\beta})\geq \depth(I_{\alpha,\beta}),$$ see Proposition $1.7$. For $n\geq m\geq 1$, the \emph{$m$-path ideal of the path graph} of length $n$ is
$$I_{n,m}=(x_1x_2\cdots x_m, x_2x_3\cdots x_{m+1},\ldots,x_{n-m+1}\cdots x_n)\subset S.$$
As a consequence of Theorem $1.6$, we give another prove of \cite[Theorem 1.3]{path}, where we computed $\sdepth(S/I_{n,m})$.

In the second section, we give algorithms to compute the Hilbert series and the Betti numbers of the ideals $I_{\alpha,\beta}$, see Proposition $2.1$ and Proposition $2.3$, using the Eliahou-Kervaire resolution \cite{eli}.

\section{Main results}

First, we recall the well known Depth Lemma, which can be found in any standard text of commutative algebra.
We present the statement given in \cite[Lemma 1.3.9]{real}. 

\begin{lema}(Depth Lemma)
If $0 \rightarrow U \rightarrow M \rightarrow N \rightarrow 0$ is a short exact sequence of modules over a local ring $S$, or a Noetherian graded ring with $S_0$ local, then:

$(1)$ $\depth M \geq \min\{\depth N,\depth U\}$.

$(2)$ $\depth U \geq \min\{\depth M,\depth N +1\}$.

$(3)$ $\depth N \geq \min\{\depth U-1,\depth M\}$.
\end{lema}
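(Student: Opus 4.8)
The plan is to derive all three inequalities from one and the same tool: the long exact sequence of $\operatorname{Ext}$ modules, together with the standard characterisation of depth by $\operatorname{Ext}$ (Rees's theorem). Write $(S,\mathfrak m)$ for the local ring, $\mathfrak m$ for the (graded) maximal ideal when $S$ is graded with $S_0$ local, and $k=S/\mathfrak m$ for the residue field; in the graded case all functors below are taken in the graded category. Recall that for a finitely generated $S$-module $L$ one has $\depth L=\inf\{\,i\ge 0:\operatorname{Ext}^i_S(k,L)\neq 0\,\}$, with the conventions $\inf\emptyset=+\infty$ and $\depth 0=+\infty$; see, e.g., \cite{real}. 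Applying $\operatorname{Hom}_S(k,-)$ to the short exact sequence $0\to U\to M\to N\to 0$ yields the long exact sequence
$$\cdots\to \operatorname{Ext}^{i-1}_S(k,N)\to \operatorname{Ext}^{i}_S(k,U)\to \operatorname{Ext}^{i}_S(k,M)\to \operatorname{Ext}^{i}_S(k,N)\to \operatorname{Ext}^{i+1}_S(k,U)\to\cdots,$$
and each of $(1)$--$(3)$ will follow by reading off a three-term stretch of it.

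For $(1)$, put $d=\min\{\depth N,\depth U\}$; if $i<d$ then $\operatorname{Ext}^i_S(k,U)=\operatorname{Ext}^i_S(k,N)=0$, so exactness at $\operatorname{Ext}^i_S(k,M)$ forces $\operatorname{Ext}^i_S(k,M)=0$, whence $\depth M\ge d$. For $(2)$, put $d=\min\{\depth M,\depth N+1\}$; if $i<d$ then $\operatorname{Ext}^i_S(k,M)=0$ (since $i<\depth M$) and $\operatorname{Ext}^{i-1}_S(k,N)=0$ (since $i-1<\depth N$), so exactness of $\operatorname{Ext}^{i-1}_S(k,N)\to\operatorname{Ext}^i_S(k,U)\to\operatorname{Ext}^i_S(k,M)$ gives $\operatorname{Ext}^i_S(k,U)=0$, whence $\depth U\ge d$. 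For $(3)$, put $d=\min\{\depth U-1,\depth M\}$; if $i<d$ then $\operatorname{Ext}^i_S(k,M)=0$ and $\operatorname{Ext}^{i+1}_S(k,U)=0$ (since $i+1<\depth U$), so exactness of $\operatorname{Ext}^i_S(k,M)\to\operatorname{Ext}^i_S(k,N)\to\operatorname{Ext}^{i+1}_S(k,U)$ gives $\operatorname{Ext}^i_S(k,N)=0$, whence $\depth N\ge d$.

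There is no serious obstacle: the statement is a bookkeeping exercise once the $\operatorname{Ext}$-characterisation of depth is granted, which is precisely why it is quoted from \cite{real} rather than reproved here. The only points deserving a word of care are the degenerate situations in which one of $U,M,N$ is zero, equivalently has infinite depth; there the relevant inequality holds vacuously or by the convention $\inf\emptyset=+\infty$, and one notes in passing that a short exact sequence with $M=0$ forces $U=N=0$, one with $U=0$ gives $M\cong N$, and one with $N=0$ gives $U\cong M$. Alternatively, one could avoid $\operatorname{Ext}$ and argue by induction on $\min\{\depth U,\depth M,\depth N\}$ using a single element that is regular on all three modules, or replace $\operatorname{Ext}^{\bullet}_S(k,-)$ by local cohomology $H^{\bullet}_{\mathfrak m}(-)$ and run the identical long-exact-sequence argument; I would keep the $\operatorname{Ext}$ version, as it is the shortest and matches the cited reference.
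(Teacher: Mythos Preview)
Your proof is correct and is the standard textbook argument via the long exact sequence in $\operatorname{Ext}^{\bullet}_S(k,-)$ together with Rees's characterisation of depth. Note, however, that the paper does not supply its own proof of this lemma: it merely recalls the statement and refers the reader to \cite[Lemma~1.3.9]{real}, so there is nothing to compare against beyond observing that your argument is precisely the one found in such standard references.
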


In \cite{asia}, Asia Rauf proved the analog of Lemma $1.1(1)$ for $\sdepth$:

\begin{lema}
Let $0 \rightarrow U \rightarrow M \rightarrow N \rightarrow 0$ be a short exact sequence of $\mathbb Z^n$-graded $S$-modules. Then:
$ \sdepth(M) \geq \min\{\sdepth(U),\sdepth(N) \}$. 
\end{lema}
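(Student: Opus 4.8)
The plan is to build a Stanley decomposition of $M$ by gluing an optimal Stanley decomposition of $U$ to a lifted copy of an optimal Stanley decomposition of $N$. Fix Stanley decompositions $\mathcal D_U\colon U=\bigoplus_{i=1}^r u_iK[Z_i]$ and $\mathcal D_N\colon N=\bigoplus_{j=1}^t n_jK[W_j]$ with $\sdepth(\mathcal D_U)=\sdepth(U)$ and $\sdepth(\mathcal D_N)=\sdepth(N)$. Since the surjection $M\to N$ is $\mathbb Z^n$-graded, hence degree-preserving, for each $j$ one may choose a homogeneous $m_j\in M$ with $\deg m_j=\deg n_j$ mapping onto $n_j$; we regard $U$ as a submodule of $M$ via the injection.

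First I would check that $V:=\sum_{j=1}^t m_jK[W_j]$ is actually the direct sum $\bigoplus_{j=1}^t m_jK[W_j]$ and that each $m_jK[W_j]$ is $K[W_j]$-free. Both follow by pushing relations forward to $N$: if $m_jf=0$ with $f\in K[W_j]$, then $n_jf=0$ in $N$, so $f=0$ by freeness of $n_jK[W_j]$; and if $\sum_j m_jf_j=0$ with $f_j\in K[W_j]$, then $\sum_j n_jf_j=0$ in $N$, so each $n_jf_j=0$ and hence each $m_jf_j=0$. The same argument shows the restriction of $M\to N$ to $V$ is injective, and it is clearly surjective since every homogeneous element of $N$ lies in $\sum_j n_jK[W_j]$. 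Thus $M\to N$ restricts to an isomorphism $V\xrightarrow{\ \sim\ }N$ of $\mathbb Z^n$-graded $K$-vector spaces, and since $U=\ker(M\to N)$ we obtain a $\mathbb Z^n$-graded $K$-vector space decomposition $M=U\oplus V$.

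Concatenating this with $\mathcal D_U$ yields
$$\mathcal D\colon\quad M=\bigoplus_{i=1}^r u_iK[Z_i]\ \oplus\ \bigoplus_{j=1}^t m_jK[W_j],$$
which is a Stanley decomposition of $M$: the first block is $K[Z_i]$-free by hypothesis on $\mathcal D_U$, the second block is $K[W_j]$-free by the previous paragraph, and the whole sum is direct because $M=U\oplus V$ and each of $U$ and $V$ is already split. Hence $\sdepth(M)\ge\sdepth(\mathcal D)=\min\{\min_i|Z_i|,\min_j|W_j|\}=\min\{\sdepth(\mathcal D_U),\sdepth(\mathcal D_N)\}=\min\{\sdepth(U),\sdepth(N)\}$.

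The only genuinely delicate point is the claim $M=U\oplus V$ as graded $K$-vector spaces, i.e.\ that $V$ meets $U$ trivially and spans $M$ modulo $U$; this is precisely where exactness of the sequence in each multidegree, together with the uniqueness of expression built into $\mathcal D_N$, is used. Everything else is bookkeeping, and choosing the lifts $m_j$ homogeneous of the same multidegree as $n_j$ (possible because $M\to N$ is a degree-preserving surjection) is what keeps all the submodules in sight $\mathbb Z^n$-graded.
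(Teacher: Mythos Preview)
Your argument is correct and is essentially the standard proof of this result. The paper itself does not prove this lemma; it merely cites Rauf \cite{asia}, where exactly this lifting argument is carried out: take optimal Stanley decompositions of $U$ and $N$, lift the homogeneous generators of the pieces of $N$ to $M$ using that the surjection is $\mathbb Z^n$-graded, and verify that the concatenation is a Stanley decomposition of $M$. Your verification that each lifted piece $m_jK[W_j]$ is free and that the sum $V=\bigoplus_j m_jK[W_j]$ is direct, together with $M=U\oplus V$ as graded $K$-vector spaces, is exactly what is needed and matches Rauf's proof.
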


We also recall the following well known results. See for instance \cite[Corollary 1.3]{asia}, \cite[Proposition 2.7]{mirci} and \cite[Corollary 3.3]{asia}.

\begin{lema}
Let $I\subset S$ be a monomial ideal and let $u\in S$ a monomial which is not in $I$. 

$(1)$ $\sdepth(S/(I:u))\geq \sdepth(S/I)$, $\sdepth(I:u)\geq \sdepth(I)$, $\depth(S/(I:u))\geq \depth(S/I)$.

$(2)$ If $u$ is regular on $S/I$, then $\sdepth(S/(I,u))=\sdepth(S/I)-1$.
\end{lema}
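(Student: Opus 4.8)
All four statements are standard; I would prove them as follows. The unifying idea behind the Stanley depth statements in $(1)$ is that a colon is, up to a $\mathbb Z^n$-graded shift, realised as an \emph{internal} submodule: the map $\overline w\mapsto\overline{uw}$ identifies $S/(I:u)$ with the submodule $M_u$ of $S/I$ spanned by the monomials $v\notin I$ that are divisible by $u$, and likewise $(I:u)$ is identified with $uS\cap I\subseteq I$. So it is enough to prove the following uniform statement: if $M=\bigoplus_{i=1}^rm_iK[Z_i]$ is a Stanley decomposition and $M_u\subseteq M$ is the span of the monomials of $M$ divisible by $u$, then $\sdepth(M_u)\ge\min_i|Z_i|$.

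To prove that, I would intersect $M_u$ with each Stanley space. Writing $u=\prod_jx_j^{e_j}$ and $m_i=\prod_jx_j^{f_j}$, there is a $w\in K[Z_i]$ with $m_iw$ divisible by $u$ if and only if $e_j\le f_j$ for every $j$ with $x_j\notin Z_i$, and in that case the set of such $w$ is the principal monomial ideal $w_i^{*}K[Z_i]$, where $w_i^{*}=\prod_{x_j\in Z_i}x_j^{\max(e_j-f_j,\,0)}$. Hence $m_iK[Z_i]\cap M_u$ is either $0$ or the free $K[Z_i]$-module $m_iw_i^{*}K[Z_i]$, and the nonzero intersections assemble into a Stanley decomposition of $M_u$ whose $\sdepth$ is at least $\min_i|Z_i|$. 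Feeding in an optimal decomposition of $S/I$, resp. of $I$, then gives $\sdepth(S/(I:u))\ge\sdepth(S/I)$ and $\sdepth(I:u)\ge\sdepth(I)$. For the inequality $\depth(S/(I:u))\ge\depth(S/I)$ I would instead use the short exact sequence
\[
0\longrightarrow (S/(I:u))(-\deg u)\xrightarrow{\ \cdot u\ }S/I\longrightarrow S/(I+uS)\longrightarrow 0
\]
together with part $(2)$ of the Depth Lemma, which gives only $\depth(S/(I:u))\ge\min\{\depth(S/I),\,\depth(S/(I+uS))+1\}$; completing the argument needs a lower bound on $\depth(S/(I+uS))$, and this interlocking of the three depths is exactly where I expect the real work to be, forcing a reduction to $u$ a single variable and an induction (or a direct appeal to \cite{asia}).

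For $(2)$ I would treat $u=x_n$ first, the case of a general monomial being similar. The inequality $\sdepth(S/(I,x_n))\ge\sdepth(S/I)-1$ holds with no hypothesis: deleting $x_n$ from every $Z_i$ that contains it sends an optimal Stanley decomposition of $S/I$ to a decomposition of $S/(I,x_n)$ and lowers the value by at most $1$. For the reverse inequality regularity enters essentially: when $x_n$ is regular on $S/I$ one has $(I:x_n)=I$, so the monomials of $S/I$ lying outside $(I,x_n)$ are precisely $x_n$ times the monomials of $S/I$, a self-similar description; starting from an optimal Stanley decomposition of $S/(I,x_n)$ and collecting in this description the successive powers of $x_n$ rebuilds a Stanley decomposition of $S/I$ in which each $Z_i$ has been enlarged by $x_n$, whence $\sdepth(S/I)\ge\sdepth(S/(I,x_n))+1$. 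The main obstacle in the lemma as a whole is the bookkeeping --- checking that the transferred and reassembled pieces really are \emph{free} over the relevant $K[Z_i]$ and that they partition the target --- and, for the depth statement, untangling the circular dependence among $\depth(S/(I:x_n))$, $\depth(S/(I,x_n))$ and $\depth(S/I)$.
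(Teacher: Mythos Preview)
The paper does not prove this lemma at all; it is stated as a recalled fact with pointers to \cite{asia} and \cite{mirci}. So there is no ``paper's own proof'' to compare against, and your write-up is effectively a reconstruction of the cited results.

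Your treatment of the Stanley depth inequalities in $(1)$ is correct and is exactly the standard argument: realise $S/(I:u)$ (resp.\ $(I:u)$) as the $u$-divisible part of $S/I$ (resp.\ $I$) and intersect a given Stanley decomposition termwise. The computation that $m_iK[Z_i]\cap M_u$ is either $0$ or $m_iw_i^{*}K[Z_i]$ with the same $Z_i$ is the whole point, and you state it cleanly.

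Two places are not complete. First, for $\depth(S/(I:u))\ge\depth(S/I)$ you correctly observe that the short exact sequence alone yields only
\[
\depth(S/(I:u))\ge\min\{\depth(S/I),\ \depth(S/(I,u))+1\},
\]
and you stop there. This is an honest gap, not a wrong turn: the usual fix is to induct on the number of variables dividing $u$, reducing to $u=x_j$, and for a single variable to combine the two exact sequences for $(I:x_j)$ and $(I,x_j)$, or, more directly, to use $\mathrm{Ass}(S/(I:u))\subseteq\mathrm{Ass}(S/I)$ and a prime--avoidance argument. Either route is short, but it is genuinely missing from your sketch.

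Second, in $(2)$ your argument for $u=x_n$ is correct (modulo a slip in wording: you mean the monomials of $S/I$ that \emph{lie in} $(x_n)$, not those ``lying outside $(I,x_n)$''), but the claim that the general monomial case is ``similar'' hides a step. The clean reduction is: assume $x_n\mid u$, set $v=u/x_n$, and note that since the support of $u$ avoids the generators of $I$ one has $(I,u):v=(I,x_n)$; then part $(1)$ gives $\sdepth(S/(I,u))\le\sdepth(S/(I,x_n))=\sdepth(S/I)-1$, while the lower bound follows by iterating the deletion argument over the variables in $\mathrm{supp}(u)$ (each deletion drops $\sdepth$ by at most one, but only the first creates a nontrivial relation). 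Without this colon identity the passage from a variable to an arbitrary regular monomial is not automatic.
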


Also, we recall the following result from \cite{asia}.

\begin{lema}(\cite[Theorem 3.1]{asia})
Let $I\subset S_1=K[x_1,\ldots,x_m]$, $J\subset S_2=K[x_{m+1},\ldots,x_n]$ be two monomial ideals and let 
$S=S_1\otimes_K S_2=K[x_1,\ldots,x_n]$. It holds that
\[ \sdepth(S/(IS,JS))\geq \sdepth_{S_1}(S_1/I) + \sdepth_{S_2}(S_2/J).\]
\end{lema}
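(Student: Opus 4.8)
The plan is to reduce the statement to a single well-chosen Stanley decomposition of the tensor product $S/(IS,JS)\cong (S_1/I)\otimes_K(S_2/J)$, built as the "product" of optimal Stanley decompositions of the two factors. First I would fix a Stanley decomposition $\mathcal{D}_1 : S_1/I = \bigoplus_{i=1}^r u_i K[Z_i]$ attaining $\sdepth(\mathcal{D}_1)=\sdepth_{S_1}(S_1/I)$, so that each $Z_i\subset\{x_1,\ldots,x_m\}$ has $|Z_i|\geq \sdepth_{S_1}(S_1/I)$, and similarly $\mathcal{D}_2 : S_2/J = \bigoplus_{k=1}^t v_k K[W_k]$ with each $W_k\subset\{x_{m+1},\ldots,x_n\}$ and $|W_k|\geq \sdepth_{S_2}(S_2/J)$.

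Next I would form the candidate decomposition
\[ \mathcal{D} : S/(IS,JS) = \bigoplus_{i=1}^r\bigoplus_{k=1}^t u_i v_k\, K[Z_i\cup W_k], \]
where the $\mathbb{Z}^n$-degree of $u_iv_k$ is the sum of the degrees of $u_i$ and $v_k$. Two things must be checked. That $\mathcal{D}$ is a direct sum of $\mathbb{Z}^n$-graded $K$-vector spaces: since $-\otimes_K-$ commutes with direct sums, $(S_1/I)\otimes_K(S_2/J)=\bigl(\bigoplus_i u_iK[Z_i]\bigr)\otimes_K\bigl(\bigoplus_k v_kK[W_k]\bigr)=\bigoplus_{i,k}\bigl(u_iK[Z_i]\otimes_K v_kK[W_k]\bigr)$, and under the identification $S=S_1\otimes_K S_2$ the summand $u_iK[Z_i]\otimes_K v_kK[W_k]$ is carried onto $u_iv_k\,K[Z_i\cup W_k]$. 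That each $u_iv_k\,K[Z_i\cup W_k]$ is a free $K[Z_i\cup W_k]$-module of rank one with basis $u_iv_k$: because $Z_i$ and $W_k$ use disjoint variables, $K[Z_i\cup W_k]\cong K[Z_i]\otimes_K K[W_k]$, and freeness is inherited from the freeness of $u_iK[Z_i]$ over $K[Z_i]$ and of $v_kK[W_k]$ over $K[W_k]$ by flat base change along $K\hookrightarrow K[W_k]$ (respectively $K\hookrightarrow K[Z_i]$).

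Finally I would read off the bound: using that $Z_i$ and $W_k$ are disjoint,
\[ \sdepth(\mathcal{D}) = \min_{i,k}|Z_i\cup W_k| = \min_{i,k}\bigl(|Z_i|+|W_k|\bigr) = \min_i|Z_i| + \min_k|W_k| = \sdepth(\mathcal{D}_1)+\sdepth(\mathcal{D}_2), \]
which equals $\sdepth_{S_1}(S_1/I)+\sdepth_{S_2}(S_2/J)$; since $\sdepth(S/(IS,JS))\geq\sdepth(\mathcal{D})$, the inequality follows. The only genuinely delicate point is the middle step: verifying that this combinatorial "product of decompositions" really satisfies the precise definition of a Stanley decomposition (compatibility with the $\mathbb{Z}^n$-grading and the directness of the sum after tensoring). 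Everything else is bookkeeping once one observes that all the relevant structures are preserved by $-\otimes_K-$.
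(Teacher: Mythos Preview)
Your argument is correct and is precisely the standard proof of this inequality: take optimal Stanley decompositions of the two factors, tensor them, and read off the bound using disjointness of the variable sets. Note, however, that the paper does not supply its own proof of this lemma; it is simply quoted from \cite[Theorem~3.1]{asia}, so there is no ``paper's proof'' to compare against beyond observing that your construction is exactly the one underlying Rauf's original result.
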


Let $0\leq s \leq n$ be an integer.
We consider two sequences of integers $\alpha: a_1 < a_2 <\cdots <a_s$ and $\beta: b_1 < b_2 <\cdots <b_s$ with 
$1\leq a_1$, $b_s\leq n$ and $a_i\leq b_i$, for all $1\leq i \leq s$. If $s=0$, then $\alpha$ and $\beta$ are the empty set. 

For $s\geq 1$, let $j:=j(\alpha,\beta):=\max\{i:\; a_i\leq b_1\}$. We consider the sequences $\alpha': a'_1=a_{j+1} < a'_2=a_{j+2}< \cdots < a'_{s-j}=a_s$ and $\beta': b'_1=b_{j+1} < b'_2=b_{j+2} < \cdots < b'_{s-j}=b_s$, obtained as restrictions of the initial sequences 
$\alpha$ and $\beta$.

Assume $s>1$ and $j>1$. If $a_{j+1}>b_1+1$, we define $\alpha'': a''_1=b_1+1 < a''_2=a_{j+1} < \cdots < a''_{s-j+1}=a_s$ and
$\beta'': b''_1=b_2 < b''_2 = b_{j+1} < \cdots < b''_{s-j+1}=b_s$. If $a_{j+1}=b_1+1$, we define $\alpha'': a''_1=b_1+1 < a''_2=a_{j+2}< \cdots < a''_{s-j}=a_s$ and $\beta'': b''_1=b_2 < b''_2 = b_{j+2} < \cdots < b''_{s-j}=b_s$.

We define, using the new sequences $\alpha',\beta',\alpha''$ and $\beta''$, recursively, the following numbers
$$ \varphi(\alpha,\beta):= \begin{cases} n,\;s=0 \\ \varphi(\alpha',\beta')-1,\;s\geq 1, j=1 \\ \varphi(\alpha'',\beta'')-1,\;s>1 ,j > 1
 \end{cases},\; \psi(\alpha,\beta):= \begin{cases} n,\;s=0 \\ \psi(\alpha',\beta')-1,\;s\geq 1 \end{cases}.$$

\begin{obs}
\emph{If $s=1$, then $\varphi(\alpha,\beta)=\psi(\alpha,\beta) = n-1$, since $\alpha'$ and $\beta'$ are the empty sets.}

\emph{If $s=2$, then $\varphi(\alpha,\beta)=n-2$. Indeed, if $b_1<a_2$, i.e. $j(\alpha,\beta)=1$, then $\varphi(\alpha,\beta)=\varphi(\alpha',\beta')-1=(n-1)-1$. If $b_1\geq a_2$, then $I_{\alpha'',\beta''} = (x_{b_1+1}\cdots x_{b_2})$ and thus $\varphi(\alpha,\beta)=\varphi(\alpha'',\beta'')-1=n-2$.}
\end{obs}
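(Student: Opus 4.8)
The plan is to verify both assertions of Remark 1.5 directly from the recursive definitions of $\varphi$ and $\psi$, treating $s=1$ and $s=2$ as base cases for the induction that will be needed later in the proof of Theorem 1.6. For $s=1$, the only subtlety is to observe that the restricted sequences $\alpha'$ and $\beta'$ defined by $a'_1 = a_{j+1}, \ldots$ are empty: since $s=1$ forces $j=j(\alpha,\beta)=1$ (as $a_1\le b_1$ always holds, and there is no $a_2$), the index $j+1=2$ exceeds $s=1$, so $\alpha'=\beta'=\emptyset$ and hence $\varphi(\alpha',\beta')=\psi(\alpha',\beta')=n$ by the $s=0$ clause. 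Plugging into the recursion gives $\varphi(\alpha,\beta)=\varphi(\alpha',\beta')-1=n-1$ and likewise $\psi(\alpha,\beta)=n-1$.

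For $s=2$, I would split into the two cases according to the value of $j=j(\alpha,\beta)=\max\{i: a_i\le b_1\}$. If $b_1<a_2$ then $j=1$, and the recursion reads $\varphi(\alpha,\beta)=\varphi(\alpha',\beta')-1$; here $\alpha':a'_1=a_2$, $\beta':b'_1=b_2$ is a length-one pair, so by the $s=1$ case just established $\varphi(\alpha',\beta')=n-1$, giving $\varphi(\alpha,\beta)=n-2$. If instead $b_1\ge a_2$ then $j=2=s$, so we are in the branch $s>1, j>1$ and must use $\varphi(\alpha'',\beta'')-1$. Here one checks that the sequences $\alpha'',\beta''$ have length one: in the subcase $a_{j+1}>b_1+1$ the formal definition gives a sequence of length $s-j+1=1$ (note $a_{j+1}=a_3$ does not exist, so effectively only the term $a''_1=b_1+1$ survives), and in the subcase $a_{j+1}=b_1+1$ it has length $s-j=0$; either way one must argue carefully that the surviving data is the single generator $x_{b_1+1}\cdots x_{b_2}$, i.e. $I_{\alpha'',\beta''}=(x_{b_1+1}\cdots x_{b_2})$, which is a length-one pair, so $\varphi(\alpha'',\beta'')=n-1$ and $\varphi(\alpha,\beta)=n-2$.

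The main obstacle is purely notational: the recursive definition of $\alpha''$ and $\beta''$ is written for the generic situation $s>1,\ j>1$ with the implicit understanding that indices beyond $s$ are simply dropped, and one must be careful that when $j=s$ the ``middle'' terms $a_{j+1},\ldots,a_s$ are vacuous, so that $\alpha''$ degenerates to at most the single term $b_1+1$ and $\beta''$ to at most $b_2$. Once one accepts this reading of the definitions, the computation is immediate; I would therefore spend a sentence making the degeneration explicit and then simply invoke the $s=1$ result. No deeper input (no Depth Lemma, no algebra) is needed — this remark is a bookkeeping check on the combinatorial functions $\varphi$ and $\psi$, included to anchor the later induction.
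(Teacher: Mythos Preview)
Your proposal is correct and follows exactly the approach the paper itself takes: the justification is already embedded in the statement of the remark, and you simply unpack the recursion for $s=1$ and for the two subcases of $s=2$. Your extra care in explaining how the definition of $\alpha'',\beta''$ degenerates when $j=s=2$ (so that $a_{j+1}$ does not exist and only the single term $a''_1=b_1+1$, $b''_1=b_2$ survives) is a welcome clarification of a point the paper leaves implicit.
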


We consider the square-free monomial ideal $I_{\alpha,\beta} = (x_{a_1}\cdots x_{b_1},x_{a_2}\cdots x_{b_2}, \ldots, x_{a_s}\cdots x_{b_s})$ in $S$. For $s=0$, we set $I_{\alpha,\beta}=0$. Our main result, is the following Theorem.

\begin{teor}
For any sequences of positive integers $\alpha$ and $\beta$ as above:

$(1)$ $\depth(S/I_{\alpha,\beta}) = \sdepth(S/I_{\alpha,\beta})= \varphi(\alpha,\beta)$.

$(2)$ $\dim(S/I_{\alpha,\beta})= \psi(\alpha,\beta)$.
\end{teor}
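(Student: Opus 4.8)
The plan is to induct on $s$, the common length of the two sequences. The base cases $s=0$ (where $I_{\alpha,\beta}=0$, so $S/I_{\alpha,\beta}=S$ and $\depth = \sdepth = \dim = n = \varphi = \psi$) and $s=1$ (where $I_{\alpha,\beta}$ is principal generated by a square-free monomial in $b_1-a_1+1$ variables, so all three invariants equal $n-1$, matching Remark $1.5$) are immediate. For the inductive step I would split into the two cases appearing in the recursive definition of $\varphi$, namely $j=j(\alpha,\beta)=1$ and $j>1$, and in each case set up a short exact sequence that lets me apply the Depth Lemma (Lemma $1.1$), Rauf's inequality (Lemma $1.2$), and the colon/quotient lemmas (Lemma $1.3$).

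For the case $j=1$: here $a_1 \le b_1 < a_2$, so the first generator $x_{a_1}\cdots x_{b_1}$ involves variables disjoint from all later generators except possibly overlapping indices below $a_2$. I would use the short exact sequence
$$0 \longrightarrow S/(I_{\alpha,\beta}:x_{a_1}\cdots x_{b_1}) \xrightarrow{\;\cdot x_{a_1}\cdots x_{b_1}\;} S/I_{\alpha,\beta} \longrightarrow S/(I_{\alpha,\beta}, x_{a_1}\cdots x_{b_1}) \longrightarrow 0,$$
or better, peel off one variable at a time: since $x_{b_1}$ does not appear in any other generator when $j=1$, one checks that $I_{\alpha,\beta}:x_{b_1}$ decomposes so that, after a change of variables, $S/I_{\alpha,\beta}$ relates to $S'/I_{\alpha',\beta'}$ with $S' = K[x_1,\dots,\widehat{x_{b_1}},\dots,x_n]$ (or the appropriate smaller ring), losing exactly one from the variable count. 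By induction $\depth(S/I_{\alpha',\beta'}) = \sdepth(S/I_{\alpha',\beta'}) = \varphi(\alpha',\beta')$, and Lemma $1.4$ together with Lemma $1.3$ forces both invariants of $S/I_{\alpha,\beta}$ to be $\varphi(\alpha',\beta')-1 = \varphi(\alpha,\beta)$; similarly $\dim$ drops by one to $\psi(\alpha',\beta')-1 = \psi(\alpha,\beta)$.

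For the case $j>1$: now $a_2 \le b_1$, so the first two generators share the variable block indexed $[a_2,b_1]$. The key is the short exact sequence with $u = x_{b_1}$ (which appears in generators $1,\dots,j$ but not in $j+1,\dots,s$):
$$0 \longrightarrow S/(I_{\alpha,\beta}:x_{b_1}) \xrightarrow{\;\cdot x_{b_1}\;} S/I_{\alpha,\beta} \longrightarrow S/(I_{\alpha,\beta}, x_{b_1}) \longrightarrow 0.$$
One computes $(I_{\alpha,\beta}:x_{b_1}) = (x_{a_1}\cdots x_{b_1-1},\dots,x_{a_j}\cdots x_{b_1-1}) + (x_{a_{j+1}}\cdots x_{b_{j+1}},\dots) $, and the first block collapses to the single monomial $x_{a_j}\cdots x_{b_1-1}$ (the others being multiples of it since $a_1<\cdots<a_j$); after a variable relabeling this is exactly $I_{\alpha'',\beta''}$ in one fewer variable, so by induction it has $\depth = \sdepth = \varphi(\alpha'',\beta'')$. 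Meanwhile $(I_{\alpha,\beta},x_{b_1})$ contains $x_{b_1}$ as a new variable-generator, and modulo it the remaining generators that used $x_{b_1}$ become shorter or redundant; one argues via Lemma $1.3(2)$ that this quotient has depth and sdepth at least as large. Feeding these into the Depth Lemma parts $(2)$ and $(3)$ and Lemma $1.2$ pins $\depth$ and $\sdepth$ of $S/I_{\alpha,\beta}$ to $\varphi(\alpha'',\beta'')-1 = \varphi(\alpha,\beta)$. The dimension statement is handled uniformly: $\dim(S/I_{\alpha,\beta}) = n - \mathrm{bigheight}$, and unwinding the recursion for $\psi$ shows it always reduces through the $\alpha',\beta'$ branch, so a direct combinatorial computation of the minimal primes (equivalently, the largest independent set of variables avoiding every generator's support) gives $\psi(\alpha,\beta)$.

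The main obstacle will be the bookkeeping in the case $j>1$: verifying precisely that $(I_{\alpha,\beta}:x_{b_1})$ and $(I_{\alpha,\beta},x_{b_1})$ match the ideals $I_{\alpha'',\beta''}$ (in the appropriate subring) in each of the two sub-cases $a_{j+1}>b_1+1$ and $a_{j+1}=b_1+1$, and then checking that the inequalities from the Depth Lemma and Lemma $1.2$ actually close up to equalities rather than leaving a gap — this requires knowing the quotient term $S/(I_{\alpha,\beta},x_{b_1})$ has strictly larger (or equal-plus-one) depth, which itself needs a separate small induction or an application of Lemma $1.3(2)$ after a further decomposition. Everything else is routine once the recursion is correctly aligned with the short exact sequences.
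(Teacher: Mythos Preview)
Your overall architecture---induction on $s$, short exact sequences, Lemmas 1.1--1.3---matches the paper, and your $j=1$ case is essentially the paper's argument. But the $j>1$ case contains a genuine computational error that breaks the approach.

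You claim that $(I_{\alpha,\beta}:x_{b_1}) = (x_{a_1}\cdots x_{b_1-1},\dots,x_{a_j}\cdots x_{b_1-1}) + I'$. This is false: for $2\le k\le j$ the generator $u_k=x_{a_k}\cdots x_{b_k}$ satisfies $a_k\le b_1<b_k$, so $x_{b_1}\mid u_k$ but $u_k/x_{b_1}=x_{a_k}\cdots x_{b_1-1}\cdot x_{b_1+1}\cdots x_{b_k}$ is \emph{not} a product of consecutive variables. These non-consecutive generators do not collapse (neither $x_{a_1}\cdots x_{b_1-1}$ nor $x_{a_2}\cdots x_{b_1-1}x_{b_1+1}\cdots x_{b_2}$ divides the other), so $(I:x_{b_1})$ is not of the form $I_{\tilde\alpha,\tilde\beta}$, no relabeling identifies it with $I_{\alpha'',\beta''}$, and you cannot apply the induction hypothesis to it.

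The paper avoids this by not stopping after one step: it colons successively by $x_{b_1},x_{b_1-1},\dots,x_{a_1+1}$, observing that at each intermediate stage the \emph{quotient} term is $(x_{b_1-i+1},I')$ with $\depth=\sdepth=\varphi(\alpha',\beta')-1$, while only the \emph{final} colon $(I:x_{a_1+1}\cdots x_{b_1})$ equals $(x_{a_1},I'')$ and lands back in the class. Closing the inequalities then requires a separate inductive verification that $\varphi(\alpha',\beta')\ge\varphi(\alpha'',\beta'')$, which you have not addressed and which is the actual crux of the $j>1$ case. Your plan for $\dim$ in part (2) is also too sketchy: the paper needs an analogous comparison $\dim(S/I')\ge\dim(S/I'')$, argued by a parallel induction, not just a one-line appeal to minimal primes.
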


\begin{proof}
$(1)$ Denote $I=I_{\alpha,\beta}$. We use induction on $s\geq 0$ and $(a_s-a_1)+(b_s-b_1)\geq 0$. If $s=0$, then $I=0$ and there is nothing to prove. If $s=1$, then $I=(x_{a_1} \cdots x_{b_1})$ is a principal ideal. By Lemma $1.3(2)$, it follows that 
$\sdepth(S/I)=\depth(S/I)=n-1=\varphi(\alpha,\beta)$. Now, assume $s>1$. 

Denote $j=j(\alpha,\beta)$ and $I'=I_{\alpha',\beta'}$.
If $j=1$, then $u=x_{a_1}\cdots x_{b_1}$ is regular on $S/I'$ and $I=(u,I')$. By Lemma $1.3(2)$ and the induction hypothesis, it follows that $\sdepth(S/I)=\sdepth(S/I')-1=\varphi(\alpha',\beta')=\varphi(\alpha,\beta)$. Also, it is clear that
$\depth(S/I)=\depth(S/I')-1=\varphi(\alpha',\beta')=\varphi(\alpha,\beta)$.

Now, assume $s>1$ and $j>1$. Denote $I''=I_{\alpha'',\beta''}$. Let $u_0=1$ and $u_i=x_{b_1-i+1}\cdots x_{b_1}$ for all $1\leq i\leq b_1-a_1-1$. We consider the short exact sequences \[(\mathcal S_i) 0 \longrightarrow S/(I:u_i) \longrightarrow S/(I:u_{i-1}) \longrightarrow 
S/(x_{b_1-i+1},(I:u_{i-1})) \longrightarrow 0,(\forall)1\leq i\leq b_1-a_1-1. \]
Note that $S/(I:u_{b_1-i+1}) = (x_{a_1},I'')$ and therefore, by induction hypothesis and by Lemma $1.3(1)$ we have 
$\depth(S/I)\leq \depth(S/(I:u_1))\leq \cdots \leq \depth(S/(I:u_{b_1-i+1}))=\varphi(\alpha'',\beta'')-1$ and, similarly,
$\sdepth(S/I)\leq \sdepth(S/(I:u_1))\leq \cdots \leq \sdepth(S/(I:u_{b_1-i+1}))=\varphi(\alpha'',\beta'')-1$.
On the other hand, for all $1\leq i\leq b_1-a_1-1$, we have $(x_{b_1-i+1},(I:u_{i-1}))= (x_{b_1-i+1},I')$.

We claim that $\varphi(\alpha',\beta') \geq \varphi(\alpha'',\beta'')$. Assume the claim is true. From the short exact sequences 
$(\mathcal S_i)$, Lemma $1.1(2)$ and Lemma $1.2$, it follows that $\depth(S/(I:u_i))\geq \depth(S/(I:u_{i-1}))$ and 
$\depth(S/(I:u_i))\geq \depth(S/(I:u_{i-1}))$ for all $1\leq i\leq b_1-a_1-1$. In particular, $\depth(S/I)\geq \varphi(\alpha'',\beta'')-1$ and
$\sdepth(S/I)\geq \varphi(\alpha'',\beta'')-1$. Since the other inequality was already checked, it follows that $\sdepth(S/I)=\depth(S/I)=\varphi(\alpha,\beta)$, as required. In order to complete the proof, we must show the claim.

If $b_2<a_{j+1}$, then $I''=(x_{b_1+1}\cdots x_{b_2},I')$ and $x_{b_1+1}\cdots x_{b_2}$ is regular on $S/I'$. By Lemma $1.3(2)$, it follows that $\varphi(\alpha',\beta')=\varphi(\alpha'',\beta'')+1$. 

Suppose that $b_2\geq a_{j+1}$. If $I'=0$, i.e. $j=s$, there is nothing to prove. Assume $j<s$ and let $j'=j(\alpha',\beta')$. For $j'=1$, one can easily see that $\sdepth(S/I')=\sdepth(S/I'')$ and $\depth(S/I')=\depth(S/I'')$, thus our claim holds. 

If $j'>1$, let $u'=x_{a_{j+1}+1}\cdots x_{b_{j+1}}$. Then, by induction hypothesis on $I'$, it follows that $\depth(S/(I':u'))=\depth(S/I')$ and $\sdepth(S/(I':u'))=\sdepth(S/I')$. On the other hand, since $(I':u')=(I'':u'x_{b_1+1}\cdots x_{a_{j+1}-1})$, by Lemma $1.3(1)$, it follows that $\varphi(\alpha'',\beta'')\leq \varphi(\alpha',\beta')$, as required.

$(2)$ We use induction on $s\geq 0$. If $s\leq 1$, then there is nothing to prove. Assume $s\geq 2$. Denote $I=I_{\alpha,\beta}$ and $I'=I_{\alpha',\beta'}$. If $j(\alpha,\beta)=1$, then $u=x_{a_1}\cdots x_{b_1}$ is regular on $S/I'$ and $(I=u,I')$. By induction hypothesis, it follows that $\dim(S/I)=\dim(S/I')-1=\psi(\alpha',\beta')-1=\psi(\alpha,\beta)$. Now, assume $j>2$ and let $v=x_{a_2}\cdots x_{b_1}$. We have the short exact sequence $0 \rightarrow S/(I:v) \rightarrow S/I \rightarrow S/(I,v) \rightarrow 0$. Note that $(I:v)=(w,I'')$, where 
$w=x_{a_1}\cdots x_{a_2-1}$. Also, $(I,v)=(I',v)$. Since $\dim(S/I)=\max\{ \dim(S/(I:v)),\dim(S/(I,v)) \}$, in order to complete the proof, it is enough to show that $\dim(S/I')\geq \dim(S/I'')$.

Denote $u_k=x_{a_k}\cdots x_{b_k}$ for all $k\leq s$. Note that $I''=(x_{b_1+1}\cdots x_{b_2},I')$. If $a_{j+1}>b_1+1$ then $G(I'')=(x_{b_1+1}\cdots x_{b_2},u_{j+1},\cdots,u_{s})$. Else, $G(I'')=(x_{b_1+1}\cdots x_{b_2},u_{j+2},\cdots,u_{s})$. In either case, $j'=j(\alpha',\beta')\geq j''=j(\alpha'',\beta'')$. By induction hypothesis, $\dim(S/I')=\dim(S/I'_1)-1$ where $I'_1=(u_{j+j'+2},\ldots,u_s)$. Similarly, $\dim(S/I'')=\dim(S/I''_1)$, where $I''_1=(u_{j+j''+1},\ldots,u_{s})$, if $a_{j+1}>b_1+1$, or $I''_1=(u_{j+j''+2},\ldots,u_{s})$, if 
$a_{j+1}=b_1+1$. Note that $|G(I'_1)| \leq |G(I''_1)|$. If we apply the same procedure on $I'_1$ and $I''_1$ we will obtain new ideals $I'_2$
and $I''_2$ with  $|G(I'_2)| \leq |G(I''_2)|$. This procedure stops at the step $k$ when $I'_k=0$. It follows that $\dim(S/I')=n-d$. On the other hand, $\dim(S/I'')=\dim(S/I''_k)-k$. Thus, we are done.
\end{proof}

\begin{prop}
For any nonempty sequences of positive integers $\alpha$ and $\beta$ as above, we have:
$\sdepth(I_{\alpha,\beta})\geq \depth(I_{\alpha,\beta})=\varphi(\alpha,\beta)+1$.
\end{prop}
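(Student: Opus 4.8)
The plan is to prove the two assertions separately. The equality $\depth(I_{\alpha,\beta}) = \varphi(\alpha,\beta)+1$ follows immediately from Theorem $1.6$ by applying the Depth Lemma (Lemma $1.1$) to $0 \to I_{\alpha,\beta} \to S \to S/I_{\alpha,\beta} \to 0$: part $(2)$ gives $\depth(I_{\alpha,\beta}) \geq \min\{\depth S,\ \depth(S/I_{\alpha,\beta})+1\} = \min\{n,\ \varphi(\alpha,\beta)+1\}$, which equals $\varphi(\alpha,\beta)+1$ since $I_{\alpha,\beta}\neq 0$ forces $\varphi(\alpha,\beta)=\depth(S/I_{\alpha,\beta})\leq n-1$; part $(3)$ gives $\varphi(\alpha,\beta)=\depth(S/I_{\alpha,\beta}) \geq \min\{\depth(I_{\alpha,\beta})-1,\ n\} = \depth(I_{\alpha,\beta})-1$ (using $\depth(I_{\alpha,\beta})\leq n$), i.e.\ $\depth(I_{\alpha,\beta}) \leq \varphi(\alpha,\beta)+1$.

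For $\sdepth(I_{\alpha,\beta}) \geq \varphi(\alpha,\beta)+1$ I would induct on $s$, in parallel with the proof of Theorem $1.6$ but carrying the ideal itself. If $s=1$, then $I_{\alpha,\beta}$ is principal, hence a free $S$-module, so $\sdepth(I_{\alpha,\beta})=n=\varphi(\alpha,\beta)+1$. For $s\geq 2$ the idea is to split off the first generator $u=x_{a_1}\cdots x_{b_1}$: set $\tilde\alpha = (a_2,\ldots,a_s)$, $\tilde\beta=(b_2,\ldots,b_s)$ and $\widehat I := I_{\tilde\alpha,\tilde\beta} = (x_{a_2}\cdots x_{b_2},\ldots,x_{a_s}\cdots x_{b_s})$. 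Since $b_2>b_1$, no generator of $\widehat I$ divides $u$, so $u\notin\widehat I$, and the surjection $S\to I_{\alpha,\beta}/\widehat I,\ f\mapsto fu+\widehat I$, identifies $I_{\alpha,\beta}/\widehat I$ with $S/(\widehat I:u)$ up to a degree shift, which changes neither $\sdepth$ nor $\depth$. Hence there is a short exact sequence
\[ 0 \longrightarrow \widehat I \longrightarrow I_{\alpha,\beta} \longrightarrow S/(\widehat I:u) \longrightarrow 0, \]
and Lemma $1.2$ gives $\sdepth(I_{\alpha,\beta}) \geq \min\{\sdepth(\widehat I),\ \sdepth(S/(\widehat I:u))\}$, where $\sdepth(\widehat I)\geq\varphi(\tilde\alpha,\tilde\beta)+1$ by the induction hypothesis.

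It then remains to identify $(\widehat I:u)$ and to compare $\varphi(\tilde\alpha,\tilde\beta)$ with $\varphi(\alpha,\beta)$. For the colon: if $j=j(\alpha,\beta)=1$ then $u$ is coprime to every generator of $\widehat I$, so $(\widehat I:u)=\widehat I=I_{\alpha',\beta'}$; if $j>1$ then, dividing $x_{a_k}\cdots x_{b_k}$ ($2\leq k\leq j$) by its gcd with $u$, one gets $x_{b_1+1}\cdots x_{b_k}$, all multiples of $x_{b_1+1}\cdots x_{b_2}$, so $(\widehat I:u)=(x_{b_1+1}\cdots x_{b_2})+I_{\alpha',\beta'}=I_{\alpha'',\beta''}$. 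In either case $(\widehat I:u)$ is the ideal of precisely the sequence used at this step of the recursion for $\varphi$, so by that recursion and Theorem $1.6$ we get $\sdepth(S/(\widehat I:u))=\depth(S/(\widehat I:u))=\varphi(\alpha,\beta)+1$. For the comparison: applying Lemma $1.1(1)$ to $0 \to I_{\alpha,\beta}/\widehat I \to S/\widehat I \to S/I_{\alpha,\beta} \to 0$, whose left-hand term is a shift of $S/(\widehat I:u)$, and invoking Theorem $1.6$, we obtain $\varphi(\tilde\alpha,\tilde\beta)=\depth(S/\widehat I)\geq\min\{\varphi(\alpha,\beta)+1,\ \varphi(\alpha,\beta)\}=\varphi(\alpha,\beta)$. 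Substituting, $\sdepth(I_{\alpha,\beta})\geq\min\{\varphi(\tilde\alpha,\tilde\beta)+1,\ \varphi(\alpha,\beta)+1\}=\varphi(\alpha,\beta)+1$, which closes the induction.

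The one genuinely delicate step is the identification $(\widehat I:u)=I_{\alpha'',\beta''}$ (resp.\ $I_{\alpha',\beta'}$): it is a matching of minimal generators resting on the fact that $x_{b_1+1}\cdots x_{b_2}$ divides $x_{b_1+1}\cdots x_{b_k}$ for $2\leq k\leq j$, and it needs the boundary cases $j=s$ and $a_{j+1}=b_1+1$ to be checked separately (exactly the bookkeeping already present in the proof of Theorem $1.6$). Everything else is a bounded combination of Lemma $1.2$, the Depth Lemma and Theorem $1.6$; the point to keep in mind is that, for Stanley depth, only the analogue of Lemma $1.1(1)$, namely Lemma $1.2$, is available, so each short exact sequence must be arranged to have the module whose Stanley depth one wants as its \emph{middle} term.
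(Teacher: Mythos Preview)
Your argument is correct and, for the inductive step, genuinely different from the paper's. The paper treats the case $j=1$ via \cite[Corollary~2.5]{mirci} (which is essentially your short exact sequence in that special case), but for $j\geq 2$ it splits $I$ along the monomial $v=x_{a_2}\cdots x_{b_1}$, writing $I=(I\cap(v))\oplus I/(I\cap(v))$, and then identifies the two pieces as $v(I:v)\cong (w,I'')$ and $S_1/vS_1\otimes_K\bar I'$ respectively; bounding the second piece requires the tensor-product estimate of Lemma~1.4. Your approach is more uniform: a single short exact sequence $0\to\widehat I\to I\to S/(\widehat I:u)\to 0$, with $u=x_{a_1}\cdots x_{b_1}$ and $\widehat I$ the ideal with the first generator removed, handles all $j$ at once, and the quotient is recognised directly as $S/I_{\alpha',\beta'}$ or $S/I_{\alpha'',\beta''}$, so Theorem~1.6 applies immediately. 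The only extra ingredient you need is the inequality $\varphi(\tilde\alpha,\tilde\beta)\geq\varphi(\alpha,\beta)$, which you extract cleanly from the Depth Lemma and Theorem~1.6 rather than from the recursion; this avoids both the external citation and Lemma~1.4. You also spell out the equality $\depth(I_{\alpha,\beta})=\varphi(\alpha,\beta)+1$, which the paper leaves implicit. The identification $(\widehat I:u)=I_{\alpha'',\beta''}$ in the boundary cases $a_{j+1}=b_1+1$ and $j=s$ that you flag as delicate does go through exactly as you say, since $x_{b_1+1}\cdots x_{b_2}$ absorbs the redundant generators.
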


\begin{proof}
If $s=1$, then $I:=I_{\alpha,\beta}$ is a principal ideal, and therefore $\sdepth(I)=n=\varphi(\alpha,\beta)+1$. 
Assume $s\geq 2$.If $j:=j(\alpha,\beta)=1$, then $I=(x_{a_1}\cdots x_{b_1}, I') $, where $I'=I_{\alpha',\beta'}$. 
According to \cite[Corollary 2.5]{mirci} and the induction hypothesis, $$\sdepth(I) \geq \min\{ \sdepth(S/I'), \sdepth(I') \} = \varphi(\alpha',\beta') = \varphi(\alpha,\beta)+1.$$

Assume $j\geq 2$. Let $v=x_{a_2}\cdots x_{b_1}$. We can write $I=(I\cap (v))\oplus I/(I\cap(v))$. Note that $I\cap (v) = v(I:v)$, and therefore $\sdepth(I\cap (v)) = \sdepth(I:v) = \sdepth(x_{a_1}\cdots x_{a_2-1},I'')$, where $I''= I_{\alpha'',\beta''})$. By \cite[Corollary 2.5]{mirci} and the induction hypothesis, it follows that $\sdepth(I\cap (v))\geq \sdepth(S/I'') = \varphi(\alpha'',\beta'') = \varphi(\alpha,\beta)+1$. 

Let $S_1=K[x_1,\ldots,x_{b_1}]$ and $S_2=K[x_{b_1+1},\ldots,x_{n}]$. 
One can easily see that $I/(I\cap(v)) \cong S_1/vS_1 \otimes_K \bar I'$, where $\bar I'=I'\cap S_2$. By Lemma $1.4$, \cite[Lemma 3.6]{hvz} and the induction hypothesis, it follows that 
$\sdepth(I/(I\cap(v))) \geq \sdepth_{S_1}(S_1/vS_1)+\sdepth_{S_2}(\bar I') \geq (b_1-1)+\varphi(\alpha',\beta')-b_1+1 \geq \varphi(\alpha,\beta)+1$. Finally, by Lemma $1.2$, we get the required conclusion.
\end{proof}

\begin{obs}
\emph{According to \cite[Theorem 2.1]{okazaki}, $\sdepth(I_{\alpha,\beta})\geq n - \left\lfloor \frac{s}{2}\right\rfloor$.}
\end{obs}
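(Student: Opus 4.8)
The plan is to obtain this estimate directly from Okazaki's general lower bound \cite[Theorem 2.1]{okazaki}: if a monomial ideal $I\subset S$ is generated by $m$ monomials, then $\sdepth(I)\geq n-\left\lfloor m/2\right\rfloor$. By its very definition $I_{\alpha,\beta}$ is generated by the $s$ monomials $u_i:=x_{a_i}\cdots x_{b_i}$, $1\leq i\leq s$, so applying the cited result with $m=s$ yields $\sdepth(I_{\alpha,\beta})\geq n-\left\lfloor s/2\right\rfloor$ at once. Thus, strictly speaking, there is nothing to prove beyond invoking \cite[Theorem 2.1]{okazaki}.

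It is nevertheless worth recording that $s$ is the honest number of generators, i.e.\ $\{u_1,\ldots,u_s\}=G(I_{\alpha,\beta})$, so that one cannot cheaply strengthen the bound by passing to a smaller generating set. Indeed, writing $\supp(u_i)=\{a_i,a_i+1,\ldots,b_i\}$, divisibility $u_i\mid u_k$ is equivalent to $\supp(u_i)\subseteq\supp(u_k)$, that is, to $a_k\leq a_i$ and $b_i\leq b_k$; since $\alpha$ and $\beta$ are strictly increasing, this is impossible for $i<k$ (as $a_i<a_k$) and for $i>k$ (as $b_k<b_i$). Hence the $u_i$ are pairwise distinct and incomparable with respect to divisibility, so none of them lies in the ideal generated by the others, and therefore $\mu(I_{\alpha,\beta})=s$.

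There is essentially no obstacle here: the weight of the statement is carried entirely by \cite[Theorem 2.1]{okazaki}, which we are free to quote, and the verification above is elementary. The only point worth a second glance is that this estimate is logically independent of the bound $\sdepth(I_{\alpha,\beta})\geq\varphi(\alpha,\beta)+1$ of Proposition $1.7$: for instance, when the intervals $[a_i,b_i]$ are pairwise disjoint the recursion gives $\varphi(\alpha,\beta)=n-s$, so for $s\geq 3$ the Okazaki bound $n-\left\lfloor s/2\right\rfloor$ is strictly the stronger of the two, whereas for $s\leq 2$ the two coincide. Collecting both inequalities side by side is thus informative, which is presumably why the remark is placed here.
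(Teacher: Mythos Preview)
Your proposal is correct and matches the paper's approach exactly: the statement is a remark with no proof in the paper beyond the citation of \cite[Theorem 2.1]{okazaki}, and you apply that result with $m=s$ just as intended. Your additional verification that $\{u_1,\ldots,u_s\}$ is the minimal generating set is a useful complement (since Okazaki's bound is stated for the minimal number of generators), though the paper itself takes this for granted.
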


\begin{prop}
Let $\alpha$ and $\beta$ as above. If $a_{k+2}>b_k+1$ for all $0\leq k\leq s-2$, then:

$(1)$ $\sdepth(S/I_{\alpha,\beta}) = \depth(S/I_{\alpha,\beta})=n-s$. 

$(2)$ $\sdepth(I_{\alpha,\beta})=n-\left\lfloor \frac{s}{2} \right\rfloor$, for all $s\geq 1$.

$(3)$ If $b_i\geq a_{i+1}$, for all $1\leq i\leq s-1$, then $\dim(S/I_{\alpha,\beta})=n-\left\lceil  \frac{s}{2} \right\rceil$.
\end{prop}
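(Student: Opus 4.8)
\emph{Part $(1)$.} I plan to reduce this to Theorem $1.6(1)$ by showing that $\varphi(\alpha,\beta)=n-s$. First I would record that the hypothesis $a_{k+2}>b_k+1$ forces $j:=j(\alpha,\beta)\le 2$: for $s\ge 3$ the case $k=1$ gives $a_3>b_1$. Moreover this hypothesis is inherited by $(\alpha',\beta')$ and, when $j=2$, by $(\alpha'',\beta'')$; and when $j=2$ one has $a_{j+1}>b_1+1$ (again the case $k=1$), so $(\alpha'',\beta'')$ is the sequence of length $s-1$ produced by the first alternative in its definition. Thus each reduction strictly decreases $s$ while preserving the hypothesis, and a short induction on $s$ (checking $s\le 2$ by hand via Remark $1.5$) through the recursion for $\varphi$ yields $\varphi(\alpha,\beta)=n-s$. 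Then $(1)$ is immediate from Theorem $1.6(1)$.

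\emph{Part $(2)$.} The lower bound $\sdepth(I_{\alpha,\beta})\ge n-\lfloor s/2\rfloor$ is Remark $1.8$, so it remains to prove the reverse inequality. Write $u_i=x_{a_i}\cdots x_{b_i}$ and $W_i=\{a_i,\dots,b_i\}$; by hypothesis $W_i\cap W_j=\emptyset$ whenever $|i-j|\ge 2$. The crucial step is to pick, for each $i$, an index $c_i$ with $a_i\le c_i\le b_i$ lying in no $W_j$ with $j\ne i$. Since $W_i$ already misses every $W_j$ with $|i-j|\ge 2$, the only conditions are $c_i>b_{i-1}$ and $c_i<a_{i+1}$, that is $c_i\in[\max(a_i,b_{i-1}+1),\,\min(b_i,a_{i+1}-1)]$, and this interval is nonempty precisely because $a_{i+1}>b_{i-1}+1$ (for $i=1$ and $i=s$ one of the two conditions is absent and there is nothing to check). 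Let $w$ be the product of the $x_\ell$ over all $\ell\in(W_1\cup\cdots\cup W_s)\setminus\{c_1,\dots,c_s\}$. Then $w\notin I_{\alpha,\beta}$, and since $W_i\cap\supp(w)=W_i\setminus\{c_i\}$ we get $(u_i):w=(x_{c_i})$, hence $I_{\alpha,\beta}:w=(x_{c_1},\dots,x_{c_s})$, the extension to $S$ of the graded maximal ideal of $K[x_{c_1},\dots,x_{c_s}]$. By \cite[Lemma 3.6]{hvz} and the known inequality $\sdepth((x_1,\dots,x_m))\le\lceil m/2\rceil$ in $K[x_1,\dots,x_m]$,
\[ \sdepth(I_{\alpha,\beta}:w)=\sdepth_{K[x_{c_1},\dots,x_{c_s}]}(x_{c_1},\dots,x_{c_s})+(n-s)\le \left\lceil\tfrac{s}{2}\right\rceil+(n-s)=n-\left\lfloor\tfrac{s}{2}\right\rfloor, \]
and Lemma $1.3(1)$ gives $\sdepth(I_{\alpha,\beta})\le\sdepth(I_{\alpha,\beta}:w)\le n-\lfloor s/2\rfloor$, which finishes $(2)$.

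\emph{Part $(3)$.} The extra hypothesis $b_i\ge a_{i+1}$ gives $a_2\le b_1$, hence $j(\alpha,\beta)=2$ for $s\ge 2$, so $\psi(\alpha,\beta)=\psi(\alpha',\beta')-1$ with $(\alpha',\beta')$ of length $s-2$ and still satisfying both hypotheses. An induction (with $s\le 1$ trivial) through the recursion for $\psi$ then gives $\psi(\alpha,\beta)=n-\lceil(s-2)/2\rceil-1=n-\lceil s/2\rceil$, and $(3)$ follows from Theorem $1.6(2)$. Concretely one may argue instead: $\dim(S/I_{\alpha,\beta})=n-\operatorname{ht}(I_{\alpha,\beta})$, and $\operatorname{ht}(I_{\alpha,\beta})$ is the least size of a subset of $\{1,\dots,n\}$ meeting every $W_i$; the spacing hypothesis makes each index meet at most two of the $W_i$, necessarily consecutive, forcing this number to be $\ge\lceil s/2\rceil$, while $W_i\cap W_{i+1}\ne\emptyset$ lets one cover the pairs $\{W_1,W_2\},\{W_3,W_4\},\dots$ with $\lceil s/2\rceil$ indices.

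\emph{Main difficulty.} Parts $(1)$ and $(3)$ are routine inductions through the recursions for $\varphi$ and $\psi$, once one observes that the hypotheses force $j(\alpha,\beta)\le 2$ and are inherited by the reduced sequences. The delicate point is the upper bound in $(2)$: constructing the "private" indices $c_i$ and verifying that $I_{\alpha,\beta}:w$ is exactly the monomial prime $(x_{c_1},\dots,x_{c_s})$. This is precisely where the spacing condition $a_{k+2}>b_k+1$ enters, and it is what upgrades the general lower bound of Remark $1.8$ to an equality.
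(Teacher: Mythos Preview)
Your arguments for parts~(1) and~(3) coincide with the paper's: both proceed by induction on $s$, observing that the spacing hypothesis forces $j(\alpha,\beta)\le 2$ and is inherited by the reduced sequences, and then invoke Theorem~1.6. Your alternative height/vertex-cover argument for~(3) is a pleasant addition not in the paper.

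For part~(2) your route is a genuine variant. The paper colons by the product of the overlaps,
\[
v=(x_{a_2}\cdots x_{b_1})(x_{a_3}\cdots x_{b_2})\cdots(x_{a_s}\cdots x_{b_{s-1}}),
\]
and observes that $(I:v)=(x_{a_1}\cdots x_{a_2-1},\,x_{b_1+1}\cdots x_{a_3-1},\,\ldots,\,x_{b_{s-1}+1}\cdots x_{b_s})$ is a monomial complete intersection with $s$ generators, so Shen's theorem \cite{shen} gives $\sdepth(I:v)=n-\lfloor s/2\rfloor$; then Lemma~1.3(1) and Remark~1.8 finish. You instead colon by a larger monomial $w$, stripping each $u_i$ down to a single ``private'' variable $x_{c_i}$, and land on the variable ideal $(x_{c_1},\ldots,x_{c_s})$. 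Since this is itself a complete intersection with $s$ generators, Shen's theorem applies here too and yields the same upper bound---so you may cite \cite{shen} directly rather than the separate inequality for the graded maximal ideal. The hypothesis $a_{k+2}>b_k+1$ enters in both approaches at the analogous point: for you it guarantees the interval $[\max(a_i,b_{i-1}+1),\min(b_i,a_{i+1}-1)]$ is nonempty, while for the paper it guarantees the middle generators $x_{b_{k-1}+1}\cdots x_{a_{k+1}-1}$ of $(I:v)$ are nonzero and have pairwise disjoint supports. Your construction has the mild advantage of producing an ideal generated by variables, making the complete-intersection verification immediate.
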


\begin{proof}
$(1)$ We use induction on $s\geq 0$. By Theorem $1.6$, it is enough to prove that $\varphi(\alpha,\beta)=n-s$. For $s=0$ there is nothing to prove. If $1\leq s\leq 2$, the conclusion follows from Remark $1.5$.

Assume $s\geq 3$. If $b_1>a_2$, i.e. $j(\alpha,\beta)=1$, note that $\alpha'$ and $\beta'$ satisfies the induction hypothesis, and therefore 
$\varphi(\alpha,\beta)=\varphi(\alpha',\beta')-1=n-(s-1)-1=n-s$. If $b_1\leq a_2$, then $j(\alpha,\beta)=2$, since $a_3>b_1$. Note that $\alpha''$ and $\beta''$ satisfy the induction hypothesis. Therefore, $\varphi(\alpha,\beta)=\varphi(\alpha'',\beta'')-1=n-(s-1)-1=n-s$.

$(2)$ Denote $I=I_{\alpha,\beta}$. Assume $s\geq 2$ and let $v= (x_{a_2}\cdots x_{b_1})(x_{a_3}\cdots x_{b_2})\cdots (x_{a_s}\cdots x_{b_{s-1}})$.  Note that $(I:v)=(x_{a_1}\cdots x_{a_2-1}, x_{b_1+1}\cdots x_{a_3-1}, \ldots, x_{b_{s-2}+1}\cdots x_{a_s-1} , x_{b_{s-1}+1}\cdots x_{b_s} )$ is a monomial complete intersection. Therefore, by \cite[Theorem 2.4]{shen}, $\sdepth(I:v)=n -\left\lfloor \frac{s}{2} \right\rfloor$. On the other hand, by Lemma $1.3(1)$ and Remark $1.4$, $\sdepth(I:v)\geq \sdepth(I) \geq n -\left\lfloor \frac{s}{2} \right\rfloor$. Thus, we are done.

$(3)$ We use induction on $s\geq 0$. If $s\leq 1$ then there is nothing to prove. If $s\geq 2$, then, by the induction hypothesis, we have $\dim(S/I)=\dim(S/I')-1 = n-\left\lceil  \frac{s-2}{2} \right\rceil - 1 =  n-\left\lceil  \frac{s}{2} \right\rceil$, as required.
\end{proof}

\begin{teor}
Let $\alpha$ and $\beta$ as above. If $a_{k+2}>b_k+1$ for all $0\leq k\leq s-2$, then $\sdepth(S/I_{\alpha,\beta}^t)= \depth(S/I_{\alpha,\beta}^t)=n-s$, for all $t\geq 1$.
\end{teor}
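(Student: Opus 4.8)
The plan is to prove both equalities simultaneously by a double induction: on the number $s$ of generators, and, for each fixed $s$, on the exponent $t$. Write $I=I_{\alpha,\beta}$ and $u_i=x_{a_i}\cdots x_{b_i}$. The cases $s\leq 1$ are immediate, since then $I^t$ is $0$ or principal (so Lemma~1.3(2) applies); and the case $t=1$ is exactly Proposition~1.9(1). So assume $s\geq 2$ and $t\geq 2$. The hypothesis $a_{k+2}>b_k+1$ forces $a_3>b_1$ when $s\geq 3$, hence $j:=j(\alpha,\beta)\in\{1,2\}$ and $u_1$ is disjoint, in its variables, from $u_3,\dots,u_s$. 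One also checks that the shifted sequences $(a_2,\dots,a_s),(b_2,\dots,b_s)$ as well as $\alpha',\beta'$ again satisfy the hypothesis, so the outer induction hypothesis applies to the ideals $(u_2,\dots,u_s)$ and $I_{\alpha',\beta'}$ (the latter viewed, when convenient, in a smaller polynomial ring).

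The crucial first step is the colon identity $(I^t:u_1)=I^{t-1}$. The inclusion $\supseteq$ is clear. For $\subseteq$, take a monomial $m$ with $mu_1\in I^t$ and a product $g=u_{i_1}\cdots u_{i_t}$ dividing $mu_1$: if some $i_\ell=1$, cancelling $u_1$ exhibits $m$ as a multiple of a product of $t-1$ generators; if no $i_\ell=1$ but some $i_\ell=2$, the only variables $g$ shares with $u_1$ are those of $P:=\gcd(u_1,u_2)=x_{a_2}\cdots x_{b_1}$, and dividing $g$ by $P$ yields $M\cdot p$, with $M=x_{b_1+1}\cdots x_{b_2}$ and $p$ a product of $t-1$ generators of $I$, which one checks divides $m$; finally, if $g$ involves neither $u_1$ nor $u_2$, then $\gcd(g,u_1)=1$ and $g\mid m$. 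This identity immediately gives the upper bounds: since $u_1$ is square-free, $u_1\notin I^t$ for $t\geq 2$, so Lemma~1.3(1) yields
$$\sdepth(S/I^t)\leq\sdepth\big(S/(I^t:u_1)\big)=\sdepth(S/I^{t-1})\leq\cdots\leq\sdepth(S/I)=n-s,$$
and identically for $\depth$, the last equality being Proposition~1.9(1). Only the reverse inequalities remain.

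For these I would use the short exact sequence
$$0\longrightarrow S/I^{t-1}\longrightarrow S/I^t\longrightarrow S/(I^t,u_1)\longrightarrow 0,$$
whose left-hand term has $\depth=\sdepth=n-s$ by the inner induction. By Lemma~1.1(1) and Lemma~1.2 it then suffices to bound $S/(I^t,u_1)=S/\big((u_1)+\widehat I^{\,t}\big)$ below by $n-s$, where $\widehat I=(u_2,\dots,u_s)$. If $j=1$, then $u_1$ and $\widehat I$ involve disjoint variables, so this module is the tensor product over $K$ of $K[x_1,\dots,x_{b_1}]/(u_1)$ and $K[x_{b_1+1},\dots,x_n]/\widehat I^{\,t}$, and the bound follows from Lemma~1.4, \cite[Lemma 3.6]{hvz}, Lemma~1.3(2) and the outer induction. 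If $j=2$, write $u_1=LP$ with $L=x_{a_1}\cdots x_{a_2-1}$ and $P$ as above; since $L$ is a nonzerodivisor modulo $\widehat I^{\,t}$, there is a short exact sequence
$$0\longrightarrow S/(P,\widehat I^{\,t})\stackrel{\cdot L}{\longrightarrow}S/(u_1,\widehat I^{\,t})\longrightarrow S/(L,\widehat I^{\,t})\longrightarrow 0,$$
in which $(L,\widehat I^{\,t})=(L)+\widehat I^{\,t}$ and $(P,\widehat I^{\,t})=(P)+(I_{\alpha',\beta'})^t$ — the latter because every product of generators of $\widehat I$ that uses $u_2$ already lies in $(P)$ — are tensor-product situations in disjoint variables, giving $\depth$ and $\sdepth$ at least $n-s$, respectively at least $n-s+1$. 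Hence $\depth$ and $\sdepth$ of $S/(u_1,\widehat I^{\,t})$ are $\geq n-s$, and the main sequence closes the induction.

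I expect the real difficulty to lie in the case $j=2$: because $u_1$ and $u_2$ share the variables of $P$, neither $(I^t:u_1)$ nor $(I^t,u_1)$ splits off by an immediate disjoint-variables argument, and one must isolate $P$ (and $L$) by hand — both in establishing $(I^t:u_1)=I^{t-1}$ and in analysing $S/(I^t,u_1)$. The remaining ingredients — that the shifted sequences inherit the hypothesis $a_{k+2}>b_k+1$, and that $\depth$ and $\sdepth$ are additive on tensor products over $K$ — are routine but should be recorded carefully.
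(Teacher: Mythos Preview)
Your proof is correct. In the case $j=1$ it coincides with the paper's argument: both use the exact sequence $0\to S/I^{t-1}\to S/I^t\to S/(I^t,u_1)\to 0$ together with $(I^t,u_1)=(u_1)+I_{\alpha',\beta'}^t$, where $u_1$ is regular on $S/I_{\alpha',\beta'}^t$.

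In the case $j=2$ the two proofs genuinely diverge. The paper does \emph{not} take the colon by $u_1$; instead it sets $v=P=x_{a_2}\cdots x_{b_1}$, proves $(I^t:v^t)=(w,I_{\alpha'',\beta''})^t$ with $w=L=x_{a_1}\cdots x_{a_2-1}$, observes that this is itself an ideal of type $I_{\bar\alpha,\bar\beta}$ with $j(\bar\alpha,\bar\beta)=1$ (so the already-treated case gives its depth/sdepth as $n-s$), and then climbs back to $S/I^t$ through the $t$ short exact sequences $0\to S/(I^t:v^i)\to S/(I^t:v^{i-1})\to S/((I^t:v^{i-1}),v)\to 0$, using that each right-hand quotient is $S/(I_{\alpha',\beta'}^t,v)$ with invariant $n-s+1$. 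Your route is more uniform --- you always take the colon by $u_1$ --- at the price of having to verify $(I^t:u_1)=I^{t-1}$ by hand when $u_1$ and $u_2$ overlap, and of introducing the auxiliary sequence $0\to S/(P,\widehat I^{\,t})\to S/(u_1,\widehat I^{\,t})\to S/(L,\widehat I^{\,t})\to 0$ to handle the sum term. The paper's route trades that case analysis for a reduction of $j=2$ to $j=1$ via an auxiliary pair $(\bar\alpha,\bar\beta)$. Both arguments rely on the same inductive bookkeeping (that the truncated sequences again satisfy $a_{k+2}>b_k+1$) and the same standard toolkit (Lemmas~1.1--1.4), so neither is strictly more elementary; yours is a bit shorter and avoids iterating $t$ exact sequences.
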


\begin{proof}
We use induction on $s\geq 0$ and $t\geq 1$. For $s\leq 1$, there is nothing to prove. The case $t=1$ was done in Proposition $1.9$. Assume $s\geq 2$ and $t\geq 2$. If $a_2> b_1$, then $u=x_{a_1}\cdots a_{b_1}$ is regular on $S/I_{\alpha',\beta'}$ and $I_{\alpha,\beta}=(u,I_{\alpha',\beta'})$. We consider the short exact sequence:
\[ 0 \longrightarrow S/(I_{\alpha,\beta}^t:u) \longrightarrow S/I_{\alpha,\beta}^t \longrightarrow S/(I_{\alpha,\beta}^t,u) \longrightarrow 0.\]
Note that $(I_{\alpha,\beta}^t:u) = I_{\alpha,\beta}^{t-1}$ and $(I_{\alpha,\beta}^t,u)=(I_{\alpha',\beta'}^t,u)$. By induction hypothesis and Lemma $1.3(2)$, we get $\sdepth(S/(I_{\alpha,\beta}^t:u))=\depth(S/(I_{\alpha,\beta}^t:u))=\sdepth(S/(I_{\alpha,\beta}^t,u))=\sdepth(S/(I_{\alpha,\beta}^t,u))=n-s$. From the above short exact sequence, Lemma $1.1(2)$, Lemma $1.2$ and Lemma $1.3(1)$, we get the required conclusion.

Now, assume $a_2\leq b_1$. Let $v=x_{a_2}\cdots x_{b_1}$, $w=x_{a_1}\cdots x_{a_2-1}$ and $u=vw$. We claim that 
$(I_{\alpha,\beta}^t:v^t) = (w, I_{\alpha'',\beta''})^t$. Since $I_{\alpha,\beta} = (vw,vx_{b_1+1}\cdots x_{b_2},I_{\alpha',\beta'})$, it follows that $(I_{\alpha,\beta}:v) = (w,I_{\alpha'',\beta''})$ and therefore $(I_{\alpha,\beta}^t:v^t) \supseteq (w, I_{\alpha'',\beta''})^t$. In order to prove the other inclusion, let $m\in S$ be a minimal monomial generator of $I_{\alpha,\beta}^t$. It follows that $m=u^i (vx_{b_1+1}\cdots x_{b_2})^j \cdot m'^k$, where $m'$ is a minimal monomial generator of $I_{\alpha',\beta'}$ and $i+j+k=t$. Let $\bar m\in S$ be a monomial such that $m|v^t \bar m$. It follows that $v^{i+j}m'^k| \bar m$, and therefore $\bar m \in (w, I_{\alpha'',\beta''})^t$, thus we proved our claim.
Note that $(w, I_{\alpha'',\beta''})=I_{\bar a,\bar b}$, where $\bar a: a_1<b_1+1<a_3<\cdots<a_s$ and $\bar b: a_2-1<b_2<\cdots<b_s$.
By Lemma $1.3(1)$ and the first part of the proof, it follows that $\depth(S/I_{\alpha,\beta}^t)\leq \depth(S/I_{\bar a,\bar b}^t)= n-s$ 
and $\sdepth(S/I_{\alpha,\beta}^t)\leq \sdepth(S/I_{\bar a,\bar b}^t)= n-s$. In order to prove the other inequalities, we consider short exact sequences:
\[0 \longrightarrow S/(I_{\alpha,\beta}^t:v^i) \longrightarrow S/(I_{\alpha,\beta}^t:v^{i-1}) \longrightarrow 
S/((I_{\alpha,\beta}^t:v^{i-1}),v) \longrightarrow 0,\;1\leq i\leq t.\]
Note that $((I_{\alpha,\beta}^t:v^{i-1}),v) = (I_{\alpha',\beta'}^t,v)$, for all $1\leq i\leq t$. On the other hand, by induction hypothesis and Lemma $1.3(2)$, $\sdepth(S/(I_{\alpha',\beta'}^t,v))=\depth(S/(I_{\alpha',\beta'}^t,v))=n-s+1$. If we apply repeatedly Lemma $1.1(2)$ and Lemma $1.2$ to the above exact sequences, we finally get $\depth(S/I_{\alpha,\beta}^t)\geq \depth(S/(I_{\alpha,\beta}^t:v^t)) = n-s$ and $\sdepth(S/I_{\alpha,\beta}^t)\geq \sdepth(S/(I_{\alpha,\beta}^t:v^t)) = n-s$. 
\end{proof}

\begin{prop}
Let $\alpha$ and $\beta$ as above. If $a_{k+2}=b_k+1$ for all $0\leq k\leq s-2$, then $\sdepth(S/I_{\alpha,\beta})= 
\depth(S/I_{\alpha,\beta})=n-s+\left\lfloor \frac{s}{3} \right\rfloor$.
\end{prop}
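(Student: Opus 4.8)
The plan is to deduce everything from Theorem $1.6$, which gives $\sdepth(S/I_{\alpha,\beta})=\depth(S/I_{\alpha,\beta})=\varphi(\alpha,\beta)$, so that the statement reduces to the purely numerical identity $\varphi(\alpha,\beta)=n-s+\left\lfloor\frac{s}{3}\right\rfloor$. I would prove this by induction on $s$. The base cases $s=0,1,2$ are immediate: $\varphi(\alpha,\beta)=n,n-1,n-2$ respectively, by definition and by Remark $1.5$, which matches the formula since $\left\lfloor\frac{s}{3}\right\rfloor=0$ there; note this is exactly the range in which the hypothesis is vacuous.

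For $s\ge 3$, the first step is to pin down $j:=j(\alpha,\beta)$. The hypothesis at $k=1$ gives $a_3=b_1+1$, and since the $a_i$ strictly increase, $a_2<a_3=b_1+1$, hence $a_2\le b_1<a_3$ and therefore $j=2$. So we are in the branch $j>1$ with $a_{j+1}=b_1+1$, and the recursion reads $\varphi(\alpha,\beta)=\varphi(\alpha'',\beta'')-1$, where $\alpha''=(b_1+1,a_4,a_5,\ldots,a_s)$ and $\beta''=(b_2,b_4,b_5,\ldots,b_s)$ have length $s-2$. When $s=3$ this pair has length $1$, so $\varphi(\alpha'',\beta'')=n-1$ by Remark $1.5$ and $\varphi(\alpha,\beta)=n-2=n-3+\left\lfloor\frac{3}{3}\right\rfloor$, as needed.

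For $s\ge 4$ I would apply the recursion once more to $\alpha'',\beta''$: here $a''_1=b_1+1\le b_2=b''_1$, while by the hypothesis at $k=2$ we have $a''_2=a_4=b_2+1>b''_1$, so $j(\alpha'',\beta'')=1$ and hence $\varphi(\alpha'',\beta'')=\varphi((\alpha'')',(\beta'')')-1$ with $(\alpha'')'=(a_4,\ldots,a_s)$, $(\beta'')'=(b_4,\ldots,b_s)$ of length $s-3$. This twice-restricted pair is exactly $\alpha,\beta$ restricted to the indices $\{4,\ldots,s\}$, so it inherits the hypothesis of the Proposition (the equalities $a_{k+2}=b_k+1$ for the surviving indices carry over), and the induction hypothesis gives $\varphi((\alpha'')',(\beta'')')=n-(s-3)+\left\lfloor\frac{s-3}{3}\right\rfloor=n-s+2+\left\lfloor\frac{s}{3}\right\rfloor$. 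Combining the two reduction steps, $\varphi(\alpha,\beta)=\varphi((\alpha'')',(\beta'')')-2=n-s+\left\lfloor\frac{s}{3}\right\rfloor$, which closes the induction.

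The main point to get right is the bookkeeping around $j$: one must verify $j(\alpha,\beta)=2$ (not $1$) and $j(\alpha'',\beta'')=1$, and then that the restriction to $\{4,\ldots,s\}$ again satisfies the hypothesis so that the induction — which drops $s$ by $3$ while keeping $n$ fixed — genuinely closes. The fact that $j\ne 1$ is where the hypothesis is really used: the branch $j=1$ would strip off only a single generator and yield $n-s+\left\lfloor\frac{s-1}{3}\right\rfloor$, which is off by one whenever $3\mid s$. Beyond these combinatorial checks and the floor arithmetic, no hard estimate is required, since Theorem $1.6$ already carries all of the commutative-algebra content.
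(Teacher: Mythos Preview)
Your proof is correct and follows essentially the same route as the paper's own argument: reduce to the numerical identity for $\varphi$ via Theorem~1.6, handle small $s$ by Remark~1.5, and for $s\geq 3$ unwind the recursion twice to land on the pair $(\bar\alpha,\bar\beta)=(a_4,\ldots,a_s;\,b_4,\ldots,b_s)$ of length $s-3$. The only difference is cosmetic: the paper phrases the second step by writing $I_{\alpha'',\beta''}=(x_{b_1+1}\cdots x_{b_2},L)$ with $L=I_{\bar\alpha,\bar\beta}$ and invoking regularity of the first generator, whereas you compute $j(\alpha'',\beta'')=1$ directly; these are the same observation.
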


\begin{proof}
We use induction on $s\geq 0$. By Theorem $1.6$, it is enough to prove that $\varphi(\alpha,\beta)=n-s+\left\lfloor \frac{s}{3} \right\rfloor$. For $s=0$ there is nothing to prove. 
If $1\leq s\leq 2$, the conclusion follows from Remark $1.4$. If $s=3$, note that $I_{\alpha'',\beta''}=(x_{b_1+1},\ldots,x_{b_2})$ and therefore $\varphi(\alpha,\beta)=n-2$, as required.
Assume $s\geq 4$. Note that $I_{\alpha'',\beta''}=(x_{b_1+1},\ldots,x_{b_2},L)$, where $L=I_{\bar a,\bar b}$, $\bar a: a_4<a_5<\cdots<a_s$ ,
$\bar b: b_4<b_5<\cdots<b_s$. By induction hypothesis on $L$, it follows that $\varphi(\alpha,\beta)=\varphi(\bar a,\bar b)-2 = n-(s-3)+\left\lfloor \frac{s-3}{3} \right\rfloor - 2 = n-s+\left\lfloor \frac{s}{3} \right\rfloor$, as required.
\end{proof}

Let $\alpha$ and $\beta$ as above, and assume that $a_{k+2}=b_k+1$ for all $0\leq k\leq s-2$. For $1\leq \ell\leq s$,
we consider the sequences $\alpha|_{\ell}:a_1<a_2<\cdots<a_{\ell}$ and $\beta|_{\ell}:b_1<b_2<\cdots<b_{\ell}$.
We denote
$D(\ell,t)=\depth(S/I_{\alpha|_{\ell},\beta|_{\ell}}^t)$ and $S(\ell,t)=\sdepth(S/I_{\alpha|_{\ell},\beta|_{\ell}}^t)$. 

\begin{prop}
With the above notations, the following hold:

$(1)$ $D(s,1)=S(s,1)=n-s+\left\lfloor \frac{s}{3} \right\rfloor$, for any $s\geq 1$.

$(2)$ $D(s,t)=S(s,t)=n-s$, for any $t\geq 1$ and $s\leq 3$.

$(3)$ $D(s,t-1)\geq D(s,t)\geq \min\{D(s,t-1),D(s-1,t-1)-1,D(s-2,t)-1,D(s-3,t)-2\}$, for any $s\geq 3$ and $t\geq 2$.

$(4)$ $S(s,t-1)\geq S(s,t)\geq \min\{S(s,t-1),S(s-1,t-1)-1,S(s-2,t)-1,S(s-3,t)-2\}$, for any $s\geq 3$ and $t\geq 2$.
\end{prop}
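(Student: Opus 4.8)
The plan is to read off $(1)$ from Proposition $1.11$, to settle $(2)$ as a base case, and then to prove $(3)$ and $(4)$ by a simultaneous induction on $s$ and $t$, recycling the short exact sequence technique of Theorems $1.6$ and $1.10$.

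For $(1)$, the truncations $\alpha|_{\ell},\beta|_{\ell}$ plainly inherit the relation $a_{k+2}=b_k+1$, so applying Proposition $1.11$ to $\alpha|_s=\alpha$, $\beta|_s=\beta$ gives $D(s,1)=S(s,1)=n-s+\lfloor s/3\rfloor$ at once. For $(2)$, the cases $s=0,1$ are immediate ($I^t$ is principal, so $S/I^t$ is a hypersurface quotient); for $s\le 3$ I would run the ``$a_2\le b_1$'' branch of the proof of Theorem $1.10$ essentially verbatim. Note that $a_2<a_3=b_1+1$ always forces $a_2\le b_1$, hence $j(\alpha,\beta)=2$; the point is that for $s\le 3$ the auxiliary ideals that occur ($I_{\alpha',\beta'}$ and $I_{\alpha'',\beta''}$) are either principal or a two-element monomial complete intersection, whose powers have $S/(-)$ of depth and Stanley depth constant in $t$, so the recursion collapses to a single value, with the case $t=1$ matched against $(1)$.

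For $(3)$ and $(4)$, fix $s\ge 3$, $t\ge 2$ and assume the statement for all strictly smaller pairs. As above $j:=j(\alpha,\beta)=2$; put $v=x_{a_2}\cdots x_{b_1}$, $w=x_{a_1}\cdots x_{a_2-1}$, $z=x_{b_1+1}\cdots x_{b_2}$, so $u_1=wv$, $u_2=vz$, $I=(wv,vz)+J$ with $J:=I_{\alpha',\beta'}=(u_3,\dots,u_s)$ of the same type, having $s-2$ generators and coprime to $v$; and since $a_3=b_1+1$ we are always in the case $a_{j+1}=b_1+1$, so $I_{\alpha'',\beta''}=(z)+L$ with $L=I_{(a_4,\dots,a_s),(b_4,\dots,b_s)}$ again of the same type, having $s-3$ generators, where $w,z$ are pairwise coprime and coprime to $L$. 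Exactly as in Theorem $1.10$ one checks $(I^t:v^t)=(w,I_{\alpha'',\beta''})^t=\bigl((w,z)+L\bigr)^t$. I would then work with the ``$v$-chain''
\[ 0\longrightarrow S/(I^t:v^i)\longrightarrow S/(I^t:v^{i-1})\longrightarrow S/\bigl((I^t:v^{i-1}),v\bigr)\longrightarrow 0,\qquad 1\le i\le t, \]
together with a separate analysis of the bottom module $S/(I^t:v^t)=S/\bigl((w,z)+L\bigr)^t$, which is handled by stripping off one power of $w$ and then one power of $z$ at a time: using $\bigl(((w,z)+L)^j:w\bigr)=((w,z)+L)^{j-1}$, $\bigl(((z)+L)^k:z\bigr)=((z)+L)^{k-1}$, and the regularity of $w$ and $z$ on every $S/L^k$, one gets $\depth S/(I^t:v^t)\ \ge\ \min_{1\le k\le t}\depth(S/L^k)-2\ =\ D(s-3,t)-2$, the last step by the monotonicity $D(s-3,k-1)\ge D(s-3,k)$, available by the induction on $s$ (it is precisely the first half of $(3)$ at the parameter $s-3$). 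As for the cokernels of the $v$-chain: for $i=1$ one has $\bigl((I^t:v^0),v\bigr)=(v)+J^t$, contributing $D(s-2,t)-1$ since $J$ is of type $s-2$ and coprime to $v$; for $i\ge 2$ one has $\bigl((I^t:v^{i-1}),v\bigr)=(v)+\sum_{a+b\le i-1}w^az^bJ^{t-a-b}$, and a short sub-analysis of these (again by short exact sequences peeling $w$ and $z$ and lowering powers of $J$) bounds their depths below by $D(s,t-1)$ and by $D(s-1,t-1)-1$. Assembling all of this through the $v$-chain with parts $(1)$--$(3)$ of the Depth Lemma gives $(3)$; rerunning the same steps with Lemma $1.2$ and Lemma $1.4$ in place of the Depth Lemma gives $(4)$. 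Since Lemma $1.2$ yields only the ``$\ge\min$'' estimate, with no dual upper bound, $(3)$ and $(4)$ come out as genuine inequalities — unlike Theorems $1.6$ and $1.10$, where $\depth$ is constant in $t$ and one gets equalities.

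The main obstacle, I expect, is the bookkeeping of the cokernels $\bigl((I^t:v^{i-1}),v\bigr)$ for $i\ge 2$. In Theorem $1.10$ these collapse to $(I_{\alpha',\beta'}^t,v)$ because $\depth S/I^t$ does not depend on $t$; here they genuinely mix the pieces $w^az^bJ^{t-a-b}$ with $a+b\le i-1$, so one must write them out explicitly, recognize each piece — via Theorem $1.6$ and the rigid shape forced by $a_{k+2}=b_k+1$ — as a monomial twist of some $I_{\gamma,\delta}^{t'}$ with $\gamma,\delta$ of type $s-1$, $s-2$ or $s-3$ and $t'\in\{t-1,t\}$, and then check that its depth and Stanley depth dominate the corresponding entry of the minimum. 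One also has to carry the monotonicity statements $D(s,t-1)\ge D(s,t)$ and $S(s,t-1)\ge S(s,t)$ through the same induction. Finally, the case distinction $a_4=b_2+1$ versus $a_4>b_2+1$ — i.e. whether $z$ abuts $u_4$ — is harmless, since $z$ stays coprime to $L$ in either case.
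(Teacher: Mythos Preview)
Your treatment of $(1)$ is fine, and your handling of $(2)$ for $s\le 2$ is along the right lines. But your plan for $(3)$ and $(4)$ takes the wrong template. You are imitating the $v$-chain of Theorem~$1.10$, iterating the colon by $v$ all the way down to $(I^t:v^t)$; that worked there because the hypothesis $a_{k+2}>b_k+1$ forces every cokernel $((I^t:v^{i-1}),v)$ to collapse to $(I_{\alpha',\beta'}^t,v)$. Under the present hypothesis $a_{k+2}=b_k+1$ this collapse fails, and your own formula $(v)+\sum_{a+b\le i-1}w^az^bJ^{t-a-b}$ shows the cokernels genuinely mix powers of $J=I_3$ with powers of $w$ and $z$. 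You then assert that a ``sub-analysis'' bounds these cokernels by $D(s,t-1)$ and $D(s-1,t-1)-1$, but nothing in your set-up produces an ideal with $s-1$ generators: $J$ has $s-2$, $L$ has $s-3$, and the combinations $(w,z,u_4,\dots,u_s)$ that arise are \emph{not} of the shape covered by Proposition~$1.11$ (the adjacency condition $a''_{k+2}=b''_k+1$ fails at the splice). So the term $D(s-1,t-1)-1$ has no visible source in your scheme, and the ``main obstacle'' you flag is in fact a genuine gap.

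The paper avoids the whole $v$-chain. It uses only three short exact sequences, coloning once by $v$ and once by $u_1=wv$, together with a parallel sequence for $I_2=(u_2,\dots,u_s)$:
\[
0\to S/(I^t:v)\to S/I^t\to S/(I^t,v)\to 0,\quad
0\to S/(I^t:u_1)\to S/(I^t:v)\to S/((I^t:v),w)\to 0,
\]
\[
0\to S/(I_2^t:u_2)\to S/(I_2^t:v)\to S/((I_2^t:v),w')\to 0,\qquad w'=x_{b_1+1}\cdots x_{b_2}.
\]
The key identities are $(I^t:u_1)=I^{t-1}$, $(I^t,v)=(I_3^t,v)$, $((I^t:v),w)=((I_2^t:v),w)$, $(I_2^t:u_2)=I_2^{t-1}$, and $((I_2^t:v),w')=(I_4^t,w')$. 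The third of these is the crucial trick: modulo $w$ the ideal $(I^t:v)$ coincides with $(I_2^t:v)$, which shifts the problem to $I_2$, an ideal of the same type with $s-1$ generators. That is precisely where $D(s-1,t-1)$ enters, and it comes cleanly, with no iterated cokernel bookkeeping. Combining the three sequences via the Depth Lemma (respectively Lemma~$1.2$) gives the four-term minimum directly.

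Finally, the monotonicity $D(s,t-1)\ge D(s,t)$ and $S(s,t-1)\ge S(s,t)$ does not need to be ``carried through the induction'': it is immediate from $(I^t:u_1)=I^{t-1}$ and Lemma~$1.3(1)$.
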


\begin{proof}
$(1)$ It is a restatement of Proposition $1.11$. Assume $t\geq 2$. If $s=1$, then there is nothing to prove. Assume $s\geq 2$.

We denote $u_i=x_{a_i}\cdots x_{b_i}$, for all $1\leq i\leq s$. Also, we denote $I_i=(u_i,\cdots,u_s)$, for all $1\leq i\leq s$. For $i>s$, we set $I_i=0$. Let $v=x_{a_2}\cdots x_{b_1}$, $w=x_{a_1}\cdots x_{a_2-1}$, $w'=x_{b_1+1}\cdots x_{b_2}$ and $u=u_1$. Denote 
$I=I_1=I_{\alpha,\beta}$. We consider the short exact sequences:
\[ 0 \longrightarrow S/(I^t:v) \longrightarrow S/I^t \longrightarrow S/(I^t,v) \longrightarrow 0,\]
\[ 0 \longrightarrow S/(I^t:u) \longrightarrow S/(I^t:v) \longrightarrow S/((I^t:v),w) \longrightarrow 0, \]
\[ 0 \longrightarrow S/(I_2^t:u_2) \longrightarrow S/(I_2^t:v) \longrightarrow S/((I_2^t:v),w') \longrightarrow 0. \]
We claim that $(I^t:u)=I^{t-1}$. Indeed, {\small
$$(I^t:u)=(I_2^t+uI_2^{t-1}+\cdots+(u^t)):u = (I_2^t:u)+I_2^{t-1}+uI_2^{t-2}+\cdots+(u^{t-1})=(I_2^t:u)+I^{t-1}=I^{t-1},$$}
since $(I_2^t:u)\subset I_2^{t-1}$. Also, $(I^t,v)=(I_3^t,v)$ and $((I^t:v),w) = ((I_2^t:v),w)$. Moreover, $(I_2^t:u_2)=I_2^{t-1}$ and $((I_2^t:v),w')=(I_4^t,w')$.

Therefore, from the exact sequences above and Lemma $1.1(2)$, it follows that:
\[ \depth(S/I^t)\geq \min\{ \depth(S/(I^t:v)), \depth(S/I_3^{t})-1 \}, \]
\[ \depth(S/(I^t:v))\geq \min\{ \depth(S/I^{t-1}), \depth(S/(I_2^{t}:v))-1 \}, \]
\[ \depth(S/(I_2^{t}:v))\geq \min\{ \depth(S/I_2^{t-1}), \depth(S/I_4^{t})-1) \}. \]
By Lemma $1.2$, we get similar inequalities for $\sdepth$. From the above inequalities, it follows:
$$(i)\; \depth(S/I^t) \geq \min\{\depth(S/I^{t-1}),  \depth(S/I_2^{t-1})-1,\depth(S/I_3^{t})-1, \depth(S/I_4^{t})-2 \},$$ {\small
$$(ii)\; \sdepth(S/I^t) \geq \min\{\sdepth(S/I^{t-1}),  \sdepth(S/I_2^{t-1})-1,\sdepth(S/I_3^{t})-1, \sdepth(S/I_4^{t})-2 \}.$$}
On the other hand, since $(I^t:u)=I^{t-1}$, by Lemma $1.3(1)$, it follows that $\depth(S/I^t)\leq \depth(S/I^{t-1})$ and
$\sdepth(S/I^t)\leq \sdepth(S/I^{t-1})$. If $s=2$, then $I_2$ is principal and $I_3=I_4=(0)$. Therefore, using induction on $t\geq 1$, by $(i)$ and $(ii)$ it follows that $\sdepth(S/I^t)=\depth(S/I^t)=n-2$, for all $t\geq 1$.
\end{proof}

\begin{cor}
With the notations of Proposition $1.12$, for any $s,t\geq 1$, we have:

$(1)$ $n-s+ \left\lfloor \frac{s}{3} \right\rfloor \geq \depth(S/I_{\alpha,\beta}^t)\geq n-s +\max\{\left\lfloor \frac{s-t+1}{3} \right\rfloor,0\}.$

$(2)$ $n-s+ \left\lfloor \frac{s}{3} \right\rfloor \geq \sdepth(S/I_{\alpha,\beta}^t)\geq n-s +\max\{\left\lfloor \frac{s-t+1}{3}\right\rfloor,0\}.$
\end{cor}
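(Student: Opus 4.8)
The plan is to deduce both statements directly from Proposition $1.12$ by a double induction that treats $\depth$ and $\sdepth$ in parallel: the recursions $1.12(3)$ and $1.12(4)$ have identical shape, so every manipulation below applies verbatim with $S(\cdot,\cdot)$ in place of $D(\cdot,\cdot)$, where (as in Proposition $1.12$) $D(s,t)=\depth(S/I_{\alpha,\beta}^t)$ and $S(s,t)=\sdepth(S/I_{\alpha,\beta}^t)$. Set $g(s,t):=\max\{\lfloor\frac{s-t+1}{3}\rfloor,0\}$, so that the two assertions to be proved read $n-s+\lfloor\frac{s}{3}\rfloor\ge D(s,t)\ge n-s+g(s,t)$, together with the analogous bounds for $S(s,t)$.

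For the upper bounds: if $s\le 3$, then $D(s,t)=n-s\le n-s+\lfloor\frac{s}{3}\rfloor$ by Proposition $1.12(2)$. If $s\ge 4$, the left-hand inequality in Proposition $1.12(3)$ gives $D(s,t)\le D(s,t-1)\le\cdots\le D(s,1)$, and $D(s,1)=n-s+\lfloor\frac{s}{3}\rfloor$ by Proposition $1.12(1)$; the same argument with $1.12(4)$ in place of $1.12(3)$ settles the upper bound for $S(s,t)$.

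For the lower bounds I would induct on $s+t$. The base cases are $t=1$, where Proposition $1.12(1)$ gives $D(s,1)=n-s+\lfloor\frac{s}{3}\rfloor=n-s+g(s,1)$, and $s\le 3$ with $t\ge 2$, where Proposition $1.12(2)$ gives $D(s,t)=n-s$ while $g(s,t)=0$ because then $s-t+1\le 2$. For the inductive step assume $s\ge 4$ and $t\ge 2$. Each of the four terms $D(s,t-1)$, $D(s-1,t-1)$, $D(s-2,t)$, $D(s-3,t)$ occurring on the right of Proposition $1.12(3)$ has strictly smaller value of $s+t$ and still satisfies $s'\ge 1$ (as $s-3\ge 1$) and $t'\ge 1$, so the induction hypothesis applies to each of them. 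Substituting the resulting lower bounds, it remains to verify the elementary inequalities $g(s,t-1)\ge g(s,t)$, $g(s-1,t-1)=g(s,t)$, $1+g(s-2,t)\ge g(s,t)$ and $1+g(s-3,t)\ge g(s,t)$; the last two follow from the identity $\lfloor\frac{m+1}{3}\rfloor=\lfloor\frac{m-2}{3}\rfloor+1$ and from $\lfloor\frac{m+1}{3}\rfloor\le\lfloor\frac{m-1}{3}\rfloor+1$ with $m=s-t$, once one accounts for when the truncation $\max\{\cdot,0\}$ is active. Taking the minimum of the four estimates then yields $D(s,t)\ge n-s+g(s,t)$, and the identical computation with $\sdepth$ and Proposition $1.12(4)$ proves part $(2)$.

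The only genuinely delicate point is this floor-function bookkeeping in the inductive step: checking the four displayed inequalities requires a short case analysis according to whether $s\ge t+2$ (equivalently, whether $\lfloor\frac{s-t+1}{3}\rfloor>0$) and, in a couple of cases, according to the signs of $\lfloor\frac{s-t-1}{3}\rfloor$ and $\lfloor\frac{s-t-2}{3}\rfloor$. Everything else is a mechanical unwinding of Proposition $1.12$, so I expect no real obstruction there.
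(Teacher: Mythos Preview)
Your proposal is correct and follows essentially the same approach as the paper: both deduce the lower bound by verifying, via induction, that the explicit formula $n-s+\max\{\lfloor(s-t+1)/3\rfloor,0\}$ is compatible with the recursion in Proposition~1.12(3),(4). The paper packages this by introducing an auxiliary sequence $d(s,t)$ satisfying the three-term recursion $d(s,t)=\min\{d(s-1,t-1)-1,\,d(s-2,t)-1,\,d(s-3,t)-2\}$ and computing it explicitly, whereas you induct directly on $s+t$ and keep all four terms; your treatment is in fact slightly more complete, since you also spell out the upper bound via the chain $D(s,t)\le D(s,t-1)\le\cdots\le D(s,1)$, which the paper's proof leaves implicit.
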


\begin{proof}
We define, $d(s,t)=n-s$ for $0\leq s\leq 2$ and any $t\geq 1$. Also, we define $d(s,1)=n-s+\frac{s}{3}$, for any $s\geq 0$. We define $d(s,t)=\min\{d(s-1,t-1)-1,d(s-2,t)-1,d(s-3,t)-2\}$. In order to complete the proof, by Proposition $1.12$, it is enough to show that 
$d(s,t)=n-s +\max\{\left\lfloor \frac{s-t+1}{3} \right\rfloor,0\}$ for any $s,t\geq 1$. If $s\leq 2$ or $t = 1$, then we are done. Now, assume $s\geq 3$ and $t\geq 2$. By induction hypothesis, $d(s,t)=\min\{ n-(s-1)+ \max\{\left\lfloor \frac{(s-1)-(t-1)+1}{3} \right\rfloor,0\}-1, n-(s-2)-1+\max\{\left\lfloor \frac{(s-2)-t+1}{3} \right\rfloor, n-(s-3)+ \max\{\left\lfloor \frac{(s-3)-t+1}{3} \right\rfloor,0\}-2 \} = n-s +\max\{\left\lfloor \frac{s-t+1}{3}\right\rfloor,0\}$, as required.
\end{proof}

\begin{exm}
\emph{Let $n=10$, $\alpha: 1<2<3<6<7$ and $\beta: 4<5<7<8<10$. We have $I_{\alpha,\beta}=(x_1x_2x_3x_4, x_2x_3x_4x_5, x_3x_4x_5x_6x_7, x_6x_7x_8, x_7x_8x_9x_{10})$. Since $b_1=4>a_3=3$ and $b_1<a_4=6$, it follows that $j(\alpha,\beta)=3$. Therefore, $\alpha': 6<7$, $\beta': 8<10$, $\alpha'': 5<6<7$ and $\beta'': 5<8<10$. Note that $\varphi(\alpha'',\beta'')=\varphi(\alpha',\beta')-1 = (10-2)-1 = 7$. Thus, by Theorem $1.6$, it follows that $\sdepth(S/I_{\alpha,\beta})=\depth(S/I_{\alpha,\beta})=\varphi(\alpha,\beta)=6$. Also, by Remark $1.8$, $\sdepth(I_{\alpha,\beta}) \geq 10 - \left\lfloor \frac{5}{2} \right\rfloor = 8$. In fact, $\sdepth(I_{\alpha,\beta})=8$.}
\end{exm}

\subsection*{The path ideal of the path graph}

Let $n\geq m \geq 2$ be two integers. The \emph{path graph} of length $n$, denoted by $L_n$, is a graph with the vertex set $V=[n]$ and the edge set $E=\{\{1,2\},\{2,3\},\ldots,\{n-1,n\}\}$. We denote $I_{n,m} =(x_1x_2\cdots x_m,  x_2x_3\cdots x_{m+1}, \ldots, x_{n-m+1}x_{n-m+2}\cdots x_n )$. Note that $I_{n,m}$ is the $m$-path ideal of the graph $L_n$, provided with the direction given by $1<2<\ldots <n$, see \cite{tuy} for further details.

According to \cite[Theorem 1.2]{tuy}, 
$$pd(S/I_{n,m}) = \begin{cases} \frac{2(n-d)}{m+1},\; n\equiv d (mod\;(m+1))\;with\; 0 \leq d\leq m-1, \\ 
\frac{2n-m+1}{m+1},\; n\equiv m (mod\;(m+1)).
 \end{cases}$$ 
By Auslander-Buchsbaum formula (see \cite{real}), it follows that $\depth(S/I_{n,m})=n-pd(S/I_{n,m})$ and, by a straightforward computation, we can see $\depth(S/I_{n,m}) =  n+1 - \left\lfloor \frac{n+1}{m+1} \right\rfloor - \left\lceil \frac{n+1}{m+1} \right\rceil=:\varphi(n,m)$. 
We recall the following result from \cite{path}.

\begin{teor}(\cite[Theorem 1.3]{path})
 $\sdepth(S/I_{n,m})=\varphi(n,m)$.
\end{teor}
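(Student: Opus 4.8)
The plan is to obtain Theorem $1.13$ as an immediate consequence of Theorem $1.6$, by exhibiting $I_{n,m}$ inside the family $I_{\alpha,\beta}$. First I would take $\alpha\colon 1<2<\cdots<n-m+1$ and $\beta\colon m<m+1<\cdots<n$, so that $a_i=i$ and $b_i=m+i-1$ for $1\le i\le s$, where $s=n-m+1$. Since $m\ge 2$, we have $a_i<b_i$ for all $i$, and also $a_1=1$, $b_s=n$, and both sequences are strictly increasing; hence $\alpha$ and $\beta$ are admissible in the sense of this section, and $I_{\alpha,\beta}=I_{n,m}$ directly from the definitions. Theorem $1.6\,(1)$ then gives $\sdepth(S/I_{n,m})=\depth(S/I_{n,m})=\varphi(\alpha,\beta)$; in particular $\sdepth(S/I_{n,m})=\depth(S/I_{n,m})$.

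It remains only to identify this common value, and this has effectively been done already: by \cite[Theorem 1.2]{tuy} and the Auslander--Buchsbaum formula, $\depth(S/I_{n,m})=n-\pd(S/I_{n,m})$, and the floor/ceiling simplification recorded before the statement rewrites the right-hand side as $n+1-\left\lfloor\frac{n+1}{m+1}\right\rfloor-\left\lceil\frac{n+1}{m+1}\right\rceil=\varphi(n,m)$. Combining this with the previous paragraph yields $\sdepth(S/I_{n,m})=\varphi(n,m)$, as claimed. As a byproduct, $\varphi(\alpha,\beta)=\varphi(n,m)$ for the sequences above, though this identity plays no role in the argument.

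In this approach there is essentially no hard step: once Theorem $1.6$ is in hand, the whole content is the observation that $I_{n,m}$ is of the form $I_{\alpha,\beta}$ with admissible data, which is trivial. The only place where a genuine computation could enter is if one insisted on a proof independent of \cite{tuy}: then one would instead evaluate the recursively defined $\varphi(\alpha,\beta)$ for these sequences by induction on $n$, separating the case $n\le 2m-1$ (where $j(\alpha,\beta)=s$, one descent lands on a principal ideal, and $\varphi=n-2$ matches $\varphi(n,m)$) from the case $n\ge 2m$ (where $j(\alpha,\beta)=m$, $a_{j+1}=b_1+1$, and the pair $\alpha'',\beta''$ strips off the single variable $x_{m+1}$ together with a shifted copy of $I_{n-m-1,m}$, reducing $\varphi$ to the same closed form with $n$ replaced by $n-m-1$). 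The citation-based route seems preferable, since the needed depth formula for $I_{n,m}$ has already been supplied above.
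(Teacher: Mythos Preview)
Your argument is correct. From Theorem~1.6 you get $\sdepth(S/I_{n,m})=\depth(S/I_{n,m})$, and the depth value has already been identified as $\varphi(n,m)$ in the paragraph preceding the statement (via \cite{tuy} and Auslander--Buchsbaum). Nothing more is needed.

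The paper takes precisely the alternative route you sketch at the end: it does \emph{not} invoke the depth formula from \cite{tuy}, but instead evaluates the recursively defined $\varphi(\alpha,\beta)$ directly by induction on $n$. The base case $n=m$ is the principal ideal; for $m<n\le 2m$ one has $I''=(x_{m+1})$, so $\varphi=n-2$; for $n\ge 2m+1$ one finds $I''=(x_{m+1},\,x_{m+2}\cdots x_{2m+1},\ldots,x_{n-m+1}\cdots x_n)$, so that $S/I''$ is, up to a shift of variables and one extra free variable, $S/I_{n-m-1,m}$, and the induction closes with $\varphi(n,m)=\varphi(n-m-1,m)+m-1$. Your citation-based route is shorter and entirely legitimate; the paper's route is self-contained within its own framework and yields, as a byproduct, an independent verification of the closed form for $\depth(S/I_{n,m})$ without appealing to \cite{tuy}.
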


\begin{proof}
We use Theorem $1.6$. Note that $I_{n,m}=I_{\alpha,\beta}$, where $\alpha: 1<2<\cdots<n-m+1$ and $\beta: m<m+1<\cdots<n$. We use induction on $n\geq m$. If $n=m$, then $I_{n,m}=(x_1\cdots x_m)$ and there is nothing to prove. If $m<n\leq 2m$, then $I'':=I_{\alpha'',\beta''} = (x_{m+1})$, and thus $\sdepth(S/I_{n,m})=n-2=\varphi(n,m)$. Assume $n\geq 2m+1$. Then, $I''=(x_{m+1},x_{m+2}\cdots x_{2m+1},\ldots,x_{n-m+1}\cdots x_{n})$. Note that $S/I''\cong S/(I_{n-m-1,m}S,x_n)$. It follows that $\sdepth(S/I)=\sdepth(S/I'')-1=\varphi(n-m-1,m)+m-1 = \varphi(n,m)$, as required.
\end{proof}

\section{Hilbert series and Betti numbers}

In this section, we study the Hilbert series and the Betti numbers of the ideal $I_{\alpha,\beta}$. 
We consider two sequences of integers $\alpha: a_1 < a_2 <\cdots <a_s$ and $\beta: b_1 < b_2 <\cdots <b_s$ with 
$1\leq a_1$, $b_s\leq n$ and $a_i\leq b_i$, for all $1\leq i \leq s$. We consider the ideal
$$I:=I_{\alpha,\beta}=(x_{a_1}\cdots x_{b_1},\ldots,x_{a_s}\cdots x_{b_s})\subset S=K[x_1,\ldots,x_n].$$
As in the first section, we consider the sequences $\alpha',\beta',\alpha''$ and $\beta''$. We denote $I':=I_{\alpha',\beta'}$
and $I'':=I_{\alpha'',\beta''}$.

\begin{prop}
It holds that
$$H_{S/I}(t) = \begin{cases} (1+t+\cdots+t^{b_1-a_1})/(1-t)^{n-1},\;s=1 \\ (1-t^{b_1-a_1+1})H_{S/I'}(t),\;s>1,j=1 \\ 
t^{b_1-a_2+1}(1-t^{a_2-a_1})H_{S/I''}(t) + (1-t^{b_1-a_2+1})H_{S/I'}(t),\;s>1,j>1
 \end{cases}.$$
\end{prop}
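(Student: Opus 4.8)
The plan is to derive all three cases directly from short exact sequences, using only that the Hilbert series is additive on short exact sequences of finitely generated $\mathbb Z$-graded modules and that $H_{M(-d)}(t)=t^dH_M(t)$; this runs parallel to the inductive bookkeeping already carried out for Theorem $1.6$. If $s=1$, then $I=(x_{a_1}\cdots x_{b_1})$ is principal, generated in degree $b_1-a_1+1$, so $0\to S(-(b_1-a_1+1))\to S\to S/I\to 0$ gives $H_{S/I}(t)=(1-t^{b_1-a_1+1})/(1-t)^{n}=(1+t+\cdots+t^{b_1-a_1})/(1-t)^{n-1}$. If $s>1$ and $j=1$, then $j=1$ forces $a_2>b_1$, so every generator of $I$ other than $u:=x_{a_1}\cdots x_{b_1}$ uses only variables of index $>b_1$; hence $u$ is a nonzerodivisor on $S/I'$, $I=(u)+I'$ and $(I':u)=I'$, and the sequence $0\to (S/I')(-(b_1-a_1+1))\xrightarrow{\,\cdot u\,}S/I'\to S/I\to 0$ yields $H_{S/I}(t)=(1-t^{b_1-a_1+1})H_{S/I'}(t)$.

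For the remaining case $s>1$, $j>1$, I would set $v:=x_{a_2}\cdots x_{b_1}$ (degree $b_1-a_2+1$) and $w:=x_{a_1}\cdots x_{a_2-1}$ (degree $a_2-a_1$) and use the sequence $0\to (S/(I:v))(-(b_1-a_2+1))\xrightarrow{\,\cdot v\,}S/I\to S/(I,v)\to 0$, which gives $H_{S/I}(t)=t^{b_1-a_2+1}H_{S/(I:v)}(t)+H_{S/(I,v)}(t)$. I would then use the two ideal identities $(I:v)=(w,I'')$ and $(I,v)=(I',v)$, which are the same computations that appear in the proof of Theorem $1.6(2)$. All variables in $w$ have index $\le a_2-1<b_1+1\le$ least index of $I''$, so $w$ is a nonzerodivisor on $S/I''$ and $H_{S/(I:v)}(t)=(1-t^{a_2-a_1})H_{S/I''}(t)$; likewise all variables in $v$ have index $\le b_1<a_{j+1}\le$ least index of $I'$, so $v$ is a nonzerodivisor on $S/I'$ and $H_{S/(I,v)}(t)=(1-t^{b_1-a_2+1})H_{S/I'}(t)$. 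Substituting into the displayed identity gives exactly the stated formula; the degenerate subcases ($I'=0$ when $j=s$, or $I''$ principal) are handled verbatim by the same computation.

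The only non-formal step, and the one I expect to be the main obstacle, is establishing the two ideal identities $(I:v)=(w,I'')$ and $(I,v)=(I',v)$: one must write down the minimal monomial generators of the colon ideal and of $(I,v)$, check exactly which of the generators $x_{a_i}\cdots x_{b_i}$ of $I$ survive (respectively become redundant) after colon by $v$ or after adjoining $v$, and match the survivors against the definition of $\alpha'',\beta''$, keeping the two cases $a_{j+1}>b_1+1$ and $a_{j+1}=b_1+1$ apart; this is also where any hidden hypothesis on $\alpha,\beta$ would surface. Once that combinatorial matching is settled, the rest is routine manipulation of geometric series and of the factors $1-t^{\deg w}$ and $1-t^{\deg v}$.
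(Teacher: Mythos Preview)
Your proposal is correct and follows essentially the same route as the paper: for $s=1$ use that $I$ is principal, for $j=1$ use that $u=x_{a_1}\cdots x_{b_1}$ is regular on $S/I'$, and for $j>1$ use the short exact sequence coming from $v=x_{a_2}\cdots x_{b_1}$ together with the identities $(I:v)=(w,I'')$ and $(I,v)=(I',v)$, then exploit the regularity of $w$ on $S/I''$ and of $v$ on $S/I'$. The paper likewise just asserts these two ideal identities (as in the proof of Theorem~1.6(2)) without writing out the generator-by-generator verification, so your highlighted ``main obstacle'' is exactly the step the paper treats as routine.
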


\begin{proof}
If $s=1$, then $I=(x_{a_1})\cap(x_{a_2})\cap \cdots \cap(x_{b_1})$. Now, assume $s\geq 2$. Let $j=j(\alpha,\beta)$ and $I'=I_{\alpha',\beta'}$. For $j\geq 2$, we let $I''=I_{\alpha'',\beta''}$.

If $j=1$, then $I=(x_{a_1},I')\cap (x_{a_1+1},I')\cap \cdots \cap (x_{b_1},I')$. Assume $j\geq 2$.
Note that $I=(I:x_{b_1})\cap (I,x_{b_1})$ and $(I,x_{b_1})=(I',x_{b_1})$. Also $(I:x_{b_1})=(I:x_{b_1}x_{b_1-1})\cap ((I:x_{b_1}),x_{b_2})$. We repeat this procedure, until we get $(I:x_{b_1}\cdots x_{2})= ((I:x_{b_1}\cdots x_{2}),x_1)$. Note that, if $b_1-2\leq i\geq b_1-a_1$, then 
$((I:x_{b_1}\cdots x_{b_1-i}),x_{b_1-i-1}) = (I'',x_{b_1-i-1})$. Recursively, we obtain the irredundant primary decomposition of $I$.

Our next aim is to describe the Hilbert series of 
$S/I$ and its Betti numbers. 
Let $u=x_{a_1}\cdots x_{b_1}$. If $s=1$, then $I=(u)$ and thus $H_{S/I}(t)=(1+t+\cdots+t^{b_1-a_1})/(1-t)^{n-1}$. Assume $s\geq 1$.
If $j=1$, then $u=x_{a_1}\cdots x_{b_1}$ is regular on $S/I'$ and $I=(I',u)$. It follows that $H_{S/I}(t)=(1-t^{b_1-a_1+1})H_{S/I'}(t)$.
Assume $j>1$ and let $v=x_{a_2}\cdots x_{b_1}$. From the short exact sequence
$0 \longrightarrow S/(I:v) \longrightarrow S/I \longrightarrow S/(I,v) \longrightarrow 0$,
it follows that $H_{S/I}(t)=t^{b_1-a_2+1}H_{S/(I:v)}(t) + H_{S/(I,v)}(t)$.

Note that $S/(I:v) = S/(I'',w)$, where $w=u/v=x_{a_1}\cdots x_{a_2-1}$. Also, $w$ is regular on $S/I''$, and therefore 
$H_{S/(I:v)}=(1-t^{a_2-a_1})H_{S/I''}(t)$. Also, $(I,v)=(I',v)$ and $v$ is regular on $S/I'$. It follows that
$H_{S/(I,v)}(t)=(1-t^{b_1-a_2+1})H_{S/I'}(t)$. Thus, we get the required result.
\end{proof}

Since $\depth(I)=\varphi(\alpha,\beta)+1$, by Auslander-Buchsbaum Theorem, it follows that the projective dimension of $I$, is $p=pd(I)=n-\varphi(\alpha,\beta)-1$. We consider the minimal graded free resolution of $I$:
\[ 0 \rightarrow \bigoplus_{t\geq 0}S(-t)^{\beta_{pt}} \rightarrow \cdots \rightarrow \bigoplus_{t\geq 0}S(-t)^{\beta_{0t}} \rightarrow I \rightarrow 0,\]
where $\beta_{it}=\beta_{it}(I) = \dim_K Tor_i(I,K)_t$ are the \emph{graded Betti numbers} of $I$. The \emph{regularity} of $I$ is $\reg(I)=\max\{t-i:\;\beta_{it}(I)\neq 0\}$. Note that, if $s=1$, then $\beta_{0d}(I)=b_1-a_1+1$ is the only nonzero Betti number. In the following, we will assume $s\geq 2$.

We recall a definition introduced in \cite{eli}. A monomial ideal $I\subset S$ is \emph{splittable}, if  
$I$ is the sum of two non-zero monomial ideals $J$ and $L$ such that $G(I)=G(J)\cup G(L)$ and there is a splitting function
$G(J\cap L) \rightarrow G(J)\times G(L)$, $w \mapsto (\phi(w),\psi(w))$, such that $w=\lcm(\phi(w),\psi(w))$ and for every nonempty
subset $G'\subset G(J\cap L)$, $\lcm(\phi(G'))$ and $\lcm(\psi(G'))$ strictly divide $\lcm(G')$. Under these assumptions, the following holds:

\begin{prop}(Eliahou-Kervaire \cite{eli} and Fattabi \cite{fatabi})
For all $i,t>0$, we have $\beta_{it}(I) = \beta_{it}(J)+\beta_{it}(L)+\beta_{i-1,t}(J\cap L)$.
\end{prop}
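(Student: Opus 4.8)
The plan is to realize the minimal free resolution of $I$ as the mapping cone of a comparison map between the minimal resolutions of $J\cap L$ and of $J\oplus L$, and to use the splitting hypothesis precisely to force that mapping cone to be minimal. Since $G(I)=G(J)\cup G(L)$, the addition map $J\oplus L\to I$, $(a,b)\mapsto a+b$, is surjective with kernel isomorphic to $J\cap L$ via $w\mapsto(w,-w)$; this produces the short exact sequence $0\to J\cap L\to J\oplus L\to I\to 0$. I would then fix minimal $\mathbb Z^n$-graded free resolutions $(F_\bullet,f_\bullet)\to J$, $(G_\bullet,g_\bullet)\to L$ and $(H_\bullet,h_\bullet)\to J\cap L$, so that $F_\bullet\oplus G_\bullet$ minimally resolves $J\oplus L$, and lift the inclusion $J\cap L\hookrightarrow J\oplus L$ to a graded comparison chain map $\theta_\bullet\colon H_\bullet\to F_\bullet\oplus G_\bullet$. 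Its mapping cone $C_\bullet$, with $C_i=F_i\oplus G_i\oplus H_{i-1}$, is then a (not necessarily minimal) graded free resolution of $I$.

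The differential of $C_\bullet$ has block form with diagonal blocks $f_i\oplus g_i$ and $-h_{i-1}$ and off-diagonal block $\theta_{i-1}$. By minimality of the three chosen resolutions the diagonal blocks have all entries in the graded maximal ideal $\mathfrak m$, so $C_\bullet$ is a \emph{minimal} resolution if and only if the comparison map $\theta_\bullet$ also has all its entries in $\mathfrak m$. Granting this, $C_\bullet$ is the minimal graded free resolution of $I$, and reading off the graded ranks in homological degree $i$ and internal degree $t$ yields the summand $\beta_{it}(J)+\beta_{it}(L)$ coming from $F_i\oplus G_i$ together with the summand $\beta_{i-1,t}(J\cap L)$ coming from $H_{i-1}$ — exactly the asserted identity. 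The hypotheses $i>0$ and $t>0$ are the natural range here: the term $H_{i-1}$ first contributes for $i\ge 1$ (as $H_{-1}=0$), and $\beta_{i,0}=0$ for the monomial ideals at hand.

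The main obstacle is to show that $\theta_\bullet$ can be chosen with all entries in $\mathfrak m$, and this is exactly where the splitting function is used. I would build $\theta_\bullet$ inductively while tracking $\mathbb Z^n$-multidegrees: a basis element of $H_i$ associated with a subset $G'\subseteq G(J\cap L)$ is supported in the multidegree of $\lcm(G')$, and the comparison map must carry it into the part of $F_i\oplus G_i$ governed by $\lcm(\phi(G'))$ and $\lcm(\psi(G'))$. At level $0$ the singleton condition gives that $w/\phi(w)$ and $w/\psi(w)$ are non-units for each $w\in G(J\cap L)$, so $\theta_0$ is minimal; at higher levels the requirement that $\lcm(\phi(G'))$ and $\lcm(\psi(G'))$ \emph{strictly} divide $\lcm(G')$ for every nonempty $G'$ forces each entry of $\theta_i$ to be a non-unit, so no entry can cancel against a diagonal block. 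Equivalently, one may route the whole argument through the long exact sequence of $\operatorname{Tor}_\bullet(-,K)$ attached to the short exact sequence above: the strict-divisibility condition is precisely what makes the map $\operatorname{Tor}_i(J\cap L,K)\to\operatorname{Tor}_i(J,K)\oplus\operatorname{Tor}_i(L,K)$ vanish for $i\ge 1$, so the long exact sequence breaks into the short exact sequences $0\to\operatorname{Tor}_i(J,K)\oplus\operatorname{Tor}_i(L,K)\to\operatorname{Tor}_i(I,K)\to\operatorname{Tor}_{i-1}(J\cap L,K)\to 0$, and comparing $K$-dimensions in each internal degree $t$ gives the formula. Verifying this vanishing rigorously, using a multigraded (lcm-lattice) model for the $\operatorname{Tor}$-modules so that the strict divisibility can push the relevant support strictly below $\lcm(G')$, is the delicate technical point of the proof.
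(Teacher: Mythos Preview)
The paper does not prove this proposition at all: it is stated with attribution to Eliahou--Kervaire and Fatabbi and then immediately used as a black box to derive Proposition~2.3. So there is no ``paper's own proof'' to compare against.

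Your outline is the standard argument underlying the cited references and is essentially correct. The short exact sequence $0\to J\cap L\to J\oplus L\to I\to 0$ together with the mapping cone (equivalently, the long exact sequence in $\operatorname{Tor}$) is exactly the engine, and you have correctly identified that the entire content of the splitting hypothesis is to force the comparison map $\theta_\bullet$ to have entries in $\mathfrak m$, so that the cone is minimal. One caution on the step you flag as delicate: the phrase ``a basis element of $H_i$ associated with a subset $G'\subseteq G(J\cap L)$'' presupposes that the multidegrees appearing in the minimal resolution of $J\cap L$ are lcm's of subsets of $G(J\cap L)$. This is true (every multigraded Betti number is supported on the lcm-lattice), but it is worth stating and justifying explicitly, since without it the strict-divisibility condition on $\lcm(\phi(G'))$ and $\lcm(\psi(G'))$ does not obviously control the entries of $\theta_i$. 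Once that is in place, your Tor-vanishing reformulation is clean and gives the formula directly.
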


Denote $u_i=x_{a_i}\cdots x_{b_i}$, for all $1\leq i\leq s$. Let $\bar \alpha: a_2<a_3<\cdots <a_s$, $\bar \beta: b_2<b_3<\cdots <b_s$ and 
$\bar I=I_{\bar \alpha,\bar \beta}$. Note that $I=(u,\bar I)$, where $u=u_1$, and $G(\bar I)=\{u_2,\ldots,u_s\}$. Also, 
$(u)\cap \bar I = u (\bar I:u)$. If $j(\alpha,\beta)=1$ then $(\bar I : u) = \bar I = I'=I_{\alpha',\beta'}$. Else, $(\bar I : u) = I''$. In both cases, note that $I=(u)+\bar I$ is a splitting in the sense of the above definition. Indeed, for $j=1$, define $\phi(u\cdot u_k)=u$ and $\psi(u\cdot u_k)$ for all $2\leq k\leq s$. In the second case, since $uI'' = (\bar u = x_{a_1}\cdots x_{b_2}, uu_{j+1},\cdots uu_{s})$, we can define $\phi(\bar u)=u$, $\psi(\bar u)=x_{b_1+1}\cdots x_{b_2}$, $\phi(uu_k)=u$ and $\psi(uu_k)=u_k$, for all $k\geq j+1$. Thus, as a direct consequence of Proposition $2.2$, we get:

\begin{prop}
With the above notations, we have:

(1) If $j=1$, then $\beta_{it}(I)=\beta_{it}((u))+\beta_{it}(I')+\beta_{i-1,t-\deg(u)}(I')$.

(2) If $j>1$, then $\beta_{it}(I)=\beta_{it}((u))+\beta_{it}(\bar I)+\beta_{i-1,t-\deg(u)}(I'')$.
\end{prop}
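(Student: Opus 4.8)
The plan is to apply Proposition 2.2 (the Eliahou--Kervaire--Fattabi formula for splittable ideals) to the splitting $I = (u) + \bar I$, where $u = u_1 = x_{a_1}\cdots x_{b_1}$ and $\bar I = I_{\bar\alpha,\bar\beta}$ with $G(\bar I) = \{u_2,\ldots,u_s\}$. First I would verify that this is genuinely a splitting in the sense of the definition recalled before Proposition 2.2: the two summands are nonzero (using $s\geq 2$), and $G(I) = G((u)) \cup G(\bar I)$ is immediate since every minimal generator of $I$ is either $u$ or one of $u_2,\ldots,u_s$ (no $u_k$ divides $u$ or vice versa, because the exponent vectors are square-free and their supports are the distinct intervals $[a_k,b_k]$). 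The substance is then to exhibit the splitting function on $G((u)\cap\bar I)$ and check the strict-divisibility condition.

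Next I would identify $(u)\cap\bar I = u\,(\bar I : u)$ and split into the two cases. If $j = j(\alpha,\beta) = 1$, then $a_k > b_1$ for all $k\geq 2$, so $u$ is coprime to each $u_k$; hence $(\bar I : u) = \bar I = I'$, and $(u)\cap \bar I = u\cdot I'$ has minimal generators $u\,u_k$, $k\geq 2$. I define $\phi(u\,u_k) = u$ and $\psi(u\,u_k) = u_k$; then $\operatorname{lcm}(\phi(u\,u_k),\psi(u\,u_k)) = u\,u_k$ since $u,u_k$ are coprime, and for any nonempty $G'\subset G((u)\cap\bar I)$ we have $\operatorname{lcm}(\phi(G')) = u$ and $\operatorname{lcm}(\psi(G')) = \operatorname{lcm}\{u_k : u\,u_k\in G'\}$, each of which strictly divides $\operatorname{lcm}(G')$ because the other (coprime, nontrivial) factor is missing. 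The formula of Proposition 2.2 then reads $\beta_{it}(I) = \beta_{it}((u)) + \beta_{it}(\bar I) + \beta_{i-1,t}((u)\cap\bar I)$; since $(u)\cap\bar I = u\cdot I'$ is just a degree shift of $I'$ by $\deg(u)$, we get $\beta_{i-1,t}((u)\cap\bar I) = \beta_{i-1,t-\deg(u)}(I')$, and since $\bar I = I'$ this is exactly statement (1). For (2), when $j > 1$, one computes $(\bar I : u) = I''$ (this is the same computation underlying the recursive definitions and is essentially the content of the paragraph preceding the proposition), so $(u)\cap\bar I = u\,I''$; using $uI'' = (\bar u = x_{a_1}\cdots x_{b_2},\, u\,u_{j+1},\ldots,u\,u_s)$ I set $\phi(\bar u) = u$, $\psi(\bar u) = x_{b_1+1}\cdots x_{b_2}$, $\phi(u\,u_k) = u$, $\psi(u\,u_k) = u_k$ for $k\geq j+1$, and check $\operatorname{lcm} = $ the generator and the strict-divisibility condition as before; Proposition 2.2 then gives (2) after the same degree shift.

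The main obstacle — and the only point requiring genuine care rather than bookkeeping — is verifying the strict-divisibility clause of the splitting definition uniformly over all nonempty subsets $G'\subset G((u)\cap\bar I)$, particularly in case (2) where the generator $\bar u = x_{a_1}\cdots x_{b_2}$ behaves differently from the $u\,u_k$: one must check that $\operatorname{lcm}(\phi(G')) = u$ still strictly divides $\operatorname{lcm}(G')$ even when $G' = \{\bar u\}$ (it does, since $x_{b_1+1}\cdots x_{b_2}$ is a nontrivial coprime factor of $\bar u$), and that $\operatorname{lcm}(\psi(G'))$, which may mix $x_{b_1+1}\cdots x_{b_2}$ with various $u_k$, strictly divides $\operatorname{lcm}(G')$ — this holds because $u$ contributes the variables $x_{a_1},\ldots,x_{b_1}$ to every element of $G'$, and $b_1\geq a_2 > a_1$ guarantees $x_{a_1}$ is always a missing factor on the $\psi$-side. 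Once this is in place the rest is the formal substitution into Proposition 2.2 together with the degree shift $\beta_{i-1,t}(uN) = \beta_{i-1,t-\deg u}(N)$, which needs no further justification.
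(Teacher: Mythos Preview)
Your proposal is correct and follows exactly the paper's route: the paper likewise sets up the splitting $I=(u)+\bar I$, identifies $(u)\cap\bar I=u(\bar I:u)$ as $uI'$ (when $j=1$) or $uI''$ (when $j>1$), defines the very same functions $\phi,\psi$, and then invokes Proposition~2.2 together with the degree shift. One small point worth tightening (it is the same in the paper): the splitting definition requires $\psi(\bar u)\in G(\bar I)$, so take $\psi(\bar u)=u_2$ rather than $x_{b_1+1}\cdots x_{b_2}$; since $\operatorname{lcm}(u,u_2)=x_{a_1}\cdots x_{b_2}=\bar u$ and $x_{a_1}\nmid u_2$, your strict-divisibility verification goes through unchanged.
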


\begin{exm}
\emph{Let $n=8$, $\alpha: 1<2<4<6$ and $\beta: 3<5<7<8$. We have $I:=I_{\alpha,\beta}=(x_1x_2x_3, x_2x_3x_4x_5, x_4x_5x_6x_7, x_6x_7x_8)$.
Note that $j=j(\alpha,\beta)=2$. It follows that $\bar I = (x_2x_3x_4x_5, x_4x_5x_6x_7, x_6x_7x_8)$, $I'=I_{\alpha',\beta'}=(x_4x_5x_6x_7, x_6x_7x_8)$ and $I''=I_{\alpha'',\beta''}=(x_4x5,x_6x_7x_8)$.}

\emph{According to Proposition $2.1$, we get $H_{S/I}(t)=t^2(1-t)H_{S/I''}(t) + (1-t)H_{S/I'}(t)$. On the other hand, $H_{S/I''}(t)=
\frac{(1-t^2)(1-t^3)}{(1-t)^8}$. Also, $H_{S/I'}(t)=\frac{t^2(1-t^2)}{(1-t)^8} + \frac{(1-t^2)}{(1-t)^8}$. It follows that:
\[H_{S/I}(t)=\frac{t^2(1+t)(1+t+t^2)}{(1-t)^5} + \frac{(t^2+1)(1+t)}{(1-t)^6} = \frac{(1+t)(-t^5+2t^2+1)}{(1-t)^6}. \]
Note that $\dim(S/I)=6$ and $\sdepth(S/I)=\depth(S/I)=5$. Also, $\sdepth(I)=7$. We leave as an exercise to the reader, the primary decomposition of $I$ and the Betti numbers.}
\end{exm}

\vspace{2mm} \noindent {\footnotesize
\begin{minipage}[b]{15cm}
Mircea Cimpoea\c s, Simion Stoilow Institute of Mathematics, Research unit 5, P.O.Box 1-764,\\
Bucharest 014700, Romania, E-mail: mircea.cimpoeas@imar.ro
\end{minipage}}

\end{document}